\newtheorem{lemma}{Lemma}
\newtheorem{theorem}{Theorem} 
\newtheorem{corollary}{Corollary} 
\newtheorem{definition}{Definition}
\newtheorem{claim}{Claim}
\newtheorem{conjecture}{Conjecture}
\newtheorem{proposition}{Proposition}
\def \s{\subset}
\date{}
\begin{document}
\title{The maximum number of induced $C_5$'s in a planar graph}
\author[1]{Debarun Ghosh}
\author[1,2]{Ervin Gy\H{o}ri} 
\author[3]{Oliver Janzer}
\author[1]{Addisu Paulos}
\author[1,2]{Nika Salia}
\author[1,4]{Oscar Zamora} 
\affil[1]{Central European University, Budapest\par
\texttt{oscarz93@yahoo.es, addisu_2004@yahoo.com, ghosh_debarun@phd.ceu.edu}}
\affil[2]{Alfr\'ed R\'enyi Institute of Mathematics, Budapest \par
\texttt{gyori.ervin@renyi.mta.hu, nika@renyi.hu }}
\affil[3]{Department of Mathematics, ETH Z\"urich, Switzerland\par \texttt{oliver.janzer@math.ethz.ch}}
\affil[4]{Universidad de Costa Rica, San Jos\'e}
\maketitle
\begin{abstract}
Finding the maximum number of induced cycles of length $k$ in a graph on $n$ vertices has been one of the most intriguing open problems of Extremal Graph Theory.  Recently Balogh, Hu, Lidick\'{y} and Pfender answered the question in the case $k=5$.  In this paper we determine precisely, for all sufficiently large $n$, the maximum number of induced $5$-cycles that an $n$-vertex planar graph can contain.
\end{abstract}

\section{Introduction}
Tur\'{a}n-type problems are central in extremal combinatorics, with many long-standing open questions. A typical problem in the area is to determine the maximum number of graph-theoretical structures like edges, triangles or cycles in host graphs without certain substructures.

The problem of maximizing the number of induced copies of a fixed small graph $H$ has attracted a lot of attention recently, see, for example, \cite{4vertex,4vertex2,triangles}.
Morrison and Scott determined the maximum possible number of induced cycles, without restriction on length, that can be contained in a graph on $n$ vertices \cite{indcycle}. The maximal number of induced complete bipartite graphs and induced complete $r$-partite subgraphs have also been studied \cite{bol1,bol2,com_bi_par}.   The problem of determining the maximum number of induced $C_5$'s has been elusive for a long time and was finally solved by  Balogh, Hu, Lidick\'{y} and Pfender \cite{indc5}.

A planar graph is a graph that can be  drawn on the plane in such a way that its edges intersect only at their endpoints. Such a drawing of a planar graph is called a plane graph.

Let the maximum number of (not necessarily induced) copies of the fixed graph $H$ in an $n$-vertex planar graph be denoted by $f(n,H)$. The study of this function was initiated by Hakimi and Schmeichel \cite{hakimi}, who investigated the case where $H$ is a cycle. They determined the value of $f(n,C_3)$ and $f(n,C_4)$ precisely and showed that in general $f(n,C_k)=\Theta(n^{\lfloor k/2\rfloor})$. Their formula for $f(n,C_4)$ is as follows.  
\begin{theorem} (Hakimi  and Schmeichel \cite{hakimi}) \label{thm:hakimi}
$f(n,C_4)=\frac{1}{2}(n^2+3n-22)$ for $n\geq 4$.
\end{theorem}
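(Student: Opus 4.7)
My starting point is the standard identity
\[
N_{C_4}(G) \;=\; \frac{1}{2}\sum_{\{u,v\}\subseteq V(G)} \binom{c(u,v)}{2},
\]
where $c(u,v):=|N(u)\cap N(v)|$ denotes the number of common neighbors of $u$ and $v$, and $N_{C_4}(G)$ is the number of $C_4$-subgraphs of $G$. The factor $\tfrac12$ appears because each $C_4$ has two pairs of diagonally-opposite vertices, and conversely any pair $\{u,v\}$ with $c(u,v)\ge 2$ completes a $C_4$ for every choice of two common neighbors of $u,v$. Thus the problem reduces to upper-bounding $S:=\sum_{\{u,v\}}\binom{c(u,v)}{2}$ over planar graphs on $n$ vertices.

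For first-moment control, one has $\sum_{\{u,v\}} c(u,v)=\sum_{v}\binom{d(v)}{2}$, and planarity gives $\sum d(v)\le 6n-12$. By convexity, the first moment is largest when the degree sequence is as unequal as possible. This already suggests that the extremal planar $C_4$-graph is highly asymmetric, essentially $K_{2,n-2}$ with a few edges added inside the part of size $n-2$: indeed $K_{2,n-2}$ alone contains $\binom{n-2}{2}$ four-cycles, matching the leading $n^2/2$ term of $\tfrac12(n^2+3n-22)$.

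To turn this heuristic into a tight bound on $S$, I would proceed by induction on $n$, using that every planar graph has a vertex $v$ of degree at most $5$. Since $f(n)-f(n-1)=n+1$, the inductive bound applied to $G-v$ leaves a budget of $n+1$ for the number of $C_4$'s through $v$, which equals
\[
\sum_{\{x,y\}\subseteq N(v)} c_{G-v}(x,y).
\]
I would bound this quantity using planarity around $v$: the cyclic order of $N(v)$ inherited from a planar embedding of $G$ forces most pairs of neighbors of $v$ to share few common neighbors outside $v$. For $d(v)\le 3$ the estimate is almost immediate, while $d(v)\in\{4,5\}$ requires a short case analysis via Euler's formula applied to the subgraph spanned by $v$ and its neighborhood.

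The main obstacle is that a generic low-degree vertex need not be usable: in $K_{2,n-2}$ every vertex of the larger part has degree $2$ yet lies on $n-3$ four-cycles, which already nearly exhausts the budget of $n+1$. Hence $v$ must be picked so that the pairs in $\binom{N(v)}{2}$ themselves have few common neighbors in $G-v$; if no such $v$ exists, then $G$ must be structurally close to $K_{2,n-2}$, and the bound can be checked directly from this near-extremal structure. The base cases $n\in\{4,5,6,7\}$ are routine verifications, and the matching lower bound is witnessed by an explicit planar augmentation of $K_{2,n-2}$ realizing exactly $\tfrac12(n^2+3n-22)$ four-cycles.
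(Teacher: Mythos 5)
This statement is not proved in the paper: it is quoted verbatim from Hakimi and Schmeichel \cite{hakimi} as background, so there is no ``paper's own proof'' to compare against. What follows is therefore an assessment of the proposal on its own terms.

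Your setup is sound --- the identity $N_{C_4}(G)=\frac{1}{2}\sum_{\{u,v\}}\binom{c(u,v)}{2}$, the planarity bound $\sum d(v)\le 6n-12$, and the observation that $f(n)-f(n-1)=n+1$ gives the right per-vertex budget for an induction. But the core of the argument is left as an unproved dichotomy: ``either some vertex lies on at most $n+1$ four-cycles, or $G$ is structurally close to $K_{2,n-2}$ and the bound can be checked directly.'' This is exactly the hard part, and you correctly identify why --- in $K_{2,n-2}$ even the degree-$2$ vertices lie on $\Theta(n)$ four-cycles, so choosing a minimum-degree vertex is not automatically helpful --- but the proposal offers no mechanism for proving the dichotomy. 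The phrase ``a short case analysis via Euler's formula applied to the subgraph spanned by $v$ and its neighborhood'' does not address the real difficulty either: the problematic four-cycles through $v$ involve vertices far from $N(v)$ (the other common neighbors of a pair $x,y\in N(v)$), so an argument local to $N[v]$ cannot control $\sum_{\{x,y\}\subseteq N(v)} c_{G-v}(x,y)$. Until the existence of a usable vertex (or the near-extremal structure in its absence) is actually established, this is a plan rather than a proof, and the base cases and the extremal construction are likewise only asserted.
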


Recently, Gy\H ori et al. \cite{c5} determined the exact answer for the $5$-cycle.
\begin{theorem} (Gy{\H o}ri et al. \cite{c5})
    For $n = 6$ and $n \geq 8$, $f(n, C_5) = 2n^2 - 10n + 12$.  
For $n = 5$ we have $f(n, C_5) = 6$, and for $n = 7$ we have $f(n, C_5) = 41$.
\end{theorem}

Alon and Caro \cite{AC84} determined $f(n,H)$ precisely for various graphs $H$, including the complete bipartite graph $K_{2,t}$ for any $t$.

Very recently, Huynh, Joret and Wood \cite{HJW20} determined the order of magnitude of $f(n,H)$ for every graph $H$.

In this paper, we initiate the study of the variant of the above problem where we are trying to maximize the number of \emph{induced} copies of a certain fixed graph $H$ in an $n$-vertex planar graph. Since every triangle is induced, the answer is determined by the result of Hakimi and Schmeichel for $H=C_3$. For $H=C_4$, note that the planar graph $K_{2,{n-2}}$ contains exactly $\frac{1}{2}(n^2-5n+6)$ induced 4-cycles. It follows from this observation and Theorem \ref{thm:hakimi} that the maximum number of induced 4-cycles in a planar graph with $n$ vertices is $\frac{1}{2}n^2+O(n)$.

In this paper, we determine exactly the maximum number of induced $5$-cycles in a planar graph on $n$ vertices, for $n$ sufficiently large. In order to state the formula, we define the following function.

\begin{definition}
    For $n\geq 7$, let
    $$h(n)=\max\{k_1k_2+k_2k_3+k_3k_1: k_1,k_2,k_3\in \mathbb{N}, k_1+k_2+k_3=n-4\}+2.$$
\end{definition}

Clearly, the maximum is attained when $k_1$, $k_2$ and $k_3$ are as close as possible. In particular, $h(n)=n^2/3+O(n)$.

\begin{theorem}\label{maxindc5}
	There exists a positive integer $n_0$ such that if $n\geq n_0$ and $G$ is a planar graph on $n$ vertices, then $G$ contains at most $h(n)$ induced $5$-cycles. Moreover, there exists a planar graph on $n$ vertices which contains precisely $h(n)$ induced $5$-cycles.
\end{theorem}

We first describe the extremal graph $H_n$. Since the graph has a rather complex structure, we first present a simpler $n$-vertex planar graph which has $h(n)-2$ induced $5$-cycles.

Let $S_1$, $S_2$ and $S_3$ be pairwise disjoint sets of vertices such that $|S_1|+|S_2|+|S_3|=n-4$ and $|S_1|,|S_2|,|S_3|$ are as close as possible. We define an $n$-vertex planar graph $G$ as follows. The vertex set of $G$ is the three sets of vertices $S_1$, $S_2$ and $S_3$ together with four vertices, say $w_1,w_2,w_3$ and $u$. That is, $V(G)=S_1\cup S_2\cup S_3\cup \{w_1,w_2,w_3,u\}$. We define the edges of $G$ as $E(G)=\{w_1w_2, w_2w_3,w_3w_1\}\ \cup\{w_1v,vu|\   v\in S_1\}\ \cup\{w_2v,vu| \ v\in S_2\}\ \cup\{w_3v,vu|\   v\in S_3\}$ (see Figure \ref{x=5} (a)).
It can be checked that $G$ contains exactly $|S_1||S_2|+|S_2||S_3|+|S_3||S_1|=h(n)-2$ induced $C_5$'s.

\begin{figure}
\centering
\begin{tikzpicture}[scale=0.4]
\draw[fill=black](0,2)circle(5pt);
\draw[fill=black](6,6.5)circle(1pt);
\draw[fill=black](6,7)circle(1pt);
\draw[fill=black](6,-6.5)circle(1pt);
\draw[fill=black](6,-7)circle(1pt);
\draw[fill=black](6,-1)circle(1pt);
\draw[fill=black](6,-1.5)circle(1pt);
\draw[fill=black](0,-2)circle(5pt);
\draw[fill=black](2,0)circle(5pt);
\draw[fill=black](6,8)circle(5pt);
\draw[fill=black](6,5)circle(5pt);
\draw[fill=black](6,4)circle(5pt);
\draw[fill=black](6,-4)circle(5pt);
\draw[fill=black](6,-5)circle(5pt);
\draw[fill=black](6,-8)circle(5pt);
\draw[fill=black](10,0)circle(5pt);
\draw[fill=black](6,-2)circle(5pt);
\draw[fill=black](6,0)circle(5pt);
\draw[fill=black](6,1)circle(5pt);
\draw[thick](0,2)--(0,-2)--(2,0)--(0,2);
\draw[thick](0,2)--(6,8)--(10,0)(0,2)--(6,5)--(10,0)(0,2)--(6,4)--(10,0);
\draw[thick](0,-2)--(6,-8)--(10,0)(0,-2)--(6,-5)--(10,0)(0,-2)--(6,-4)--(10,0);
\draw[thick](2,0)--(6,-2)--(10,0)(2,0)--(6,0)--(10,0)(2,0)--(6,1)--(10,0);
\draw[rotate around={90:(6,6)},red] (6,6) ellipse (3 and 1); 
\draw[rotate around={90:(6,-6)},red] (6,-6) ellipse (3 and 1); 
\draw[rotate around={90:(6,-0.5)},red] (6,-0.5) ellipse (2 and 1); 
\draw[fill=black](16,0)circle(5pt);
\draw[fill=black](24,0)circle(5pt);
\draw[fill=black](20,8)circle(5pt);
\draw[fill=black](20,-8)circle(5pt);
\draw[fill=black](20,-6)circle(5pt);
\draw[fill=black](20,-4)circle(5pt);
\draw[fill=black](20,6)circle(5pt);
\draw[fill=black](20,4)circle(5pt);
\draw[fill=black](20,2)circle(5pt);
\draw[fill=black](20,-2)circle(5pt);
\draw[fill=black](20,1)circle(2pt);
\draw[fill=black](20,0)circle(2pt);
\draw[fill=black](20,-1)circle(2pt);
\draw[thick](16,0)--(20,8)--(24,0)--(20,6)--(16,0)--(20,4)--(24,0)--(20,-8)--(16,0)--(20,-6)--(24,0)--(20,-4)--(16,0)--(20,2)--(24,0)--(20,-2)--(16,0);
\node[red] at (6, 10.2) {$S_1$};
\node[red] at (6, -10.2) {$S_3$};
\node[red] at (6, 2.2)  {$S_2$};
\node at (0,-3)  {$w_3$};
\node at (0,3)  {$w_1$};
\node at (2.5,1)  {$w_2$};
\node at (10.5,0.9)  {$u$};
\node at (6, -12) {$(a)$};
\node at (20, -12) {$(b)$};
\end{tikzpicture}
\caption{Planar graphs containing asymptotically maximum number of induced 5-cycles and 4-cycles, respectively}
\label{x=5}
\end{figure}
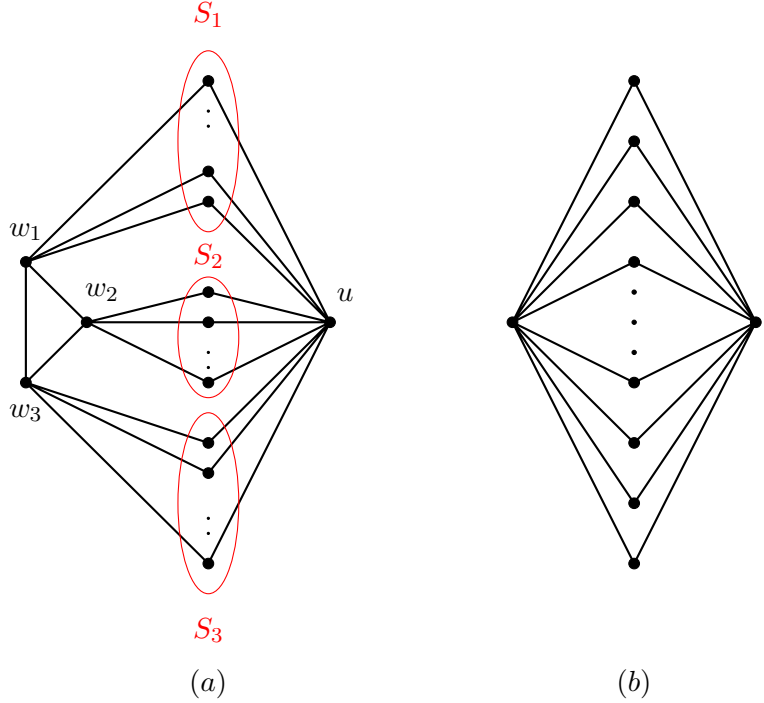

After this warm-up example, we are ready to define an $n$-vertex planar graph which has $h(n)$ induced $5$-cycles, and hence (in light of Theorem \ref{maxindc5}) is extremal for all large $n$. For an illustration, see Figure \ref{fig:extremal graph} (to see that the graph is planar, one can replace the straight dotted line between $w_1$ and $x_{3,1}$ by a non-straight edge).

\begin{definition}
    For every $n\geq 10$, define $H_n$ to be the following planar graph on $n$ vertices.
    
    The vertex set of $H_n$ is $\{w_1,w_2,w_3,w_4,u\}\cup S_1\cup S_2\cup S_3\cup \{x_{1,2},x_{2,4},x_4,x_{4,3},x_{3,1}\}$, where $|S_1|+|S_2|+|S_3|=n-10$ and $|S_1|+3$, $|S_2|$ and $|S_3|$ are as close as possible.
    
    The edges of $H_n$ are as follows:
    \begin{itemize}
        \item $w_1w_2,w_2w_3,w_3w_1,w_2w_4,w_4w_3\in E(H_n)$;
        \item for every $i\in \{1,2,3\}$ and $z\in S_i$, $w_iz,zu\in E(H_n)$;
        \item for every $(i,j)\in \{(1,2),(2,4),(4,3),(3,1)\}$, $w_ix_{i,j},w_jx_{i,j},x_{i,j}u\in E(H_n)$;
        \item $w_4x_4,x_4u,x_{2,4}x_4,x_4x_{4,3}\in E(H_n)$.
    \end{itemize}   
    
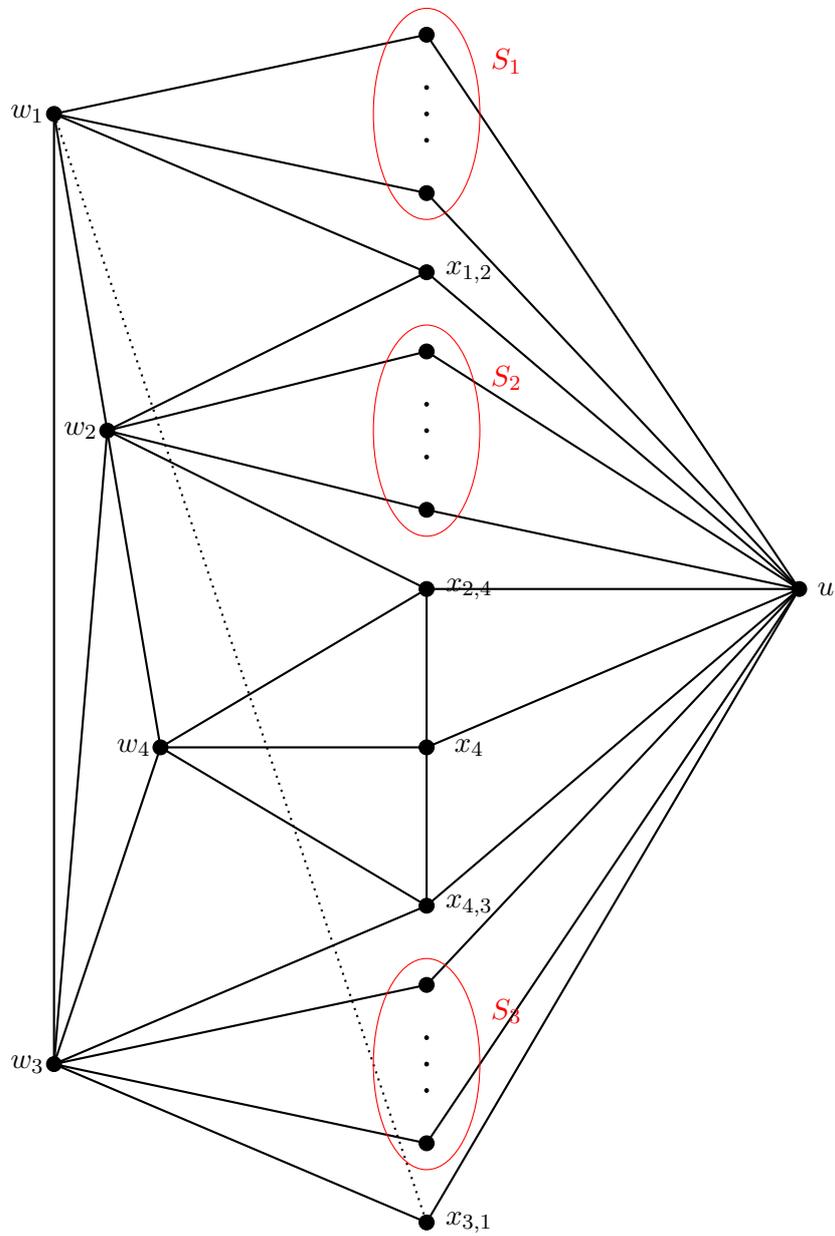
\begin{figure}
\centering
\begin{tikzpicture}[scale=0.7]
\draw[fill=black](0,-6)circle(4pt);
\draw[fill=black](2,0)circle(4pt);
\draw[fill=black](1,6)circle(4pt);
\draw[fill=black](14,3)circle(4pt);
\draw[fill=black](0,12)circle(4pt);
\draw[fill=black](7,9)circle(4pt);
\draw[fill=black](7,3)circle(4pt);
\draw[fill=black](7,-3)circle(4pt);
\draw[fill=black](7,0)circle(4pt);
\draw[fill=black](7,-9)circle(4pt);
\draw[fill=black](7,10.5)circle(4pt);
\draw[fill=black](7,13.5)circle(4pt);
\draw[fill=black](7,4.5)circle(4pt);
\draw[fill=black](7,7.5)circle(4pt);
\draw[fill=black](7,-7.5)circle(4pt);
\draw[fill=black](7,-4.5)circle(4pt);
\draw[fill=black](7,12)circle(1pt);
\draw[fill=black](7,12.5)circle(1pt);
\draw[fill=black](7,11.5)circle(1pt);
\draw[fill=black](7,6)circle(1pt);
\draw[fill=black](7,6.5)circle(1pt);
\draw[fill=black](7,5.5)circle(1pt);
\draw[fill=black](7,-6)circle(1pt);
\draw[fill=black](7,-6.5)circle(1pt);
\draw[fill=black](7,-5.5)circle(1pt);

\draw[thick](0,12)--(1,6)--(0,-6)--(2,0)--(1,6);
\draw[thick](0,12)--(0,-6);

\draw[thick](0,12)--(7,13.5)--(14,3);
\draw[thick](1,6)--(7,7.5)--(14,3);
\draw[thick](0,-6)--(7,-4.5)--(14,3);
\draw[thick](0,12)--(7,10.5)--(14,3);
\draw[thick](1,6)--(7,4.5)--(14,3);
\draw[thick](0,-6)--(7,-7.5)--(14,3);
\draw[thick](0,12)--(7,9)--(14,3);
\draw[thick](1,6)--(7,3)--(14,3);
\draw[thick](0,-6)--(7,-9)--(14,3);
\draw[thick](1,6)--(7,9);
\draw[thick](2,0)--(7,3);
\draw[thick](2,0)--(7,0);
\draw[thick](2,0)--(7,-3);
\draw[thick](7,3)--(7,0)--(7,-3)--(0,-6);
\draw[thick](7,0)--(14,3)--(7,-3);
\draw[dotted,thick](0,12)--(7,-9);

\draw[rotate around={90:(7,12)},red] (7,12) ellipse (2 and 1); 
\draw[rotate around={90:(7,6)},red] (7,6) ellipse (2 and 1);
\draw[rotate around={90:(7,-6)},red] (7,-6) ellipse (2 and 1);
\node[red] at (8.5, 13) {$S_1$};
\node[red] at (8.5, -5) {$S_3$};
\node[red] at (8.5, 7)  {$S_2$};
\node at (7.8,9) {$x_{1,2}$};
\node at (7.8,3) {$x_{2,4}$};
\node at (7.8,0) {$x_4$};
\node at (7.8,-3) {$x_{4,3}$};
\node at (7.8,-9) {$x_{3,1}$};
\node at (-0.5,12) {$w_1$};
\node at (-0.5,-6)  {$w_3$};
\node at (0.5,6)  {$w_2$};
\node at (1.5,0)  {$w_4$};
\node at (14.5,3)  {$u$};
\end{tikzpicture}
\caption{The planar graph $H_n$ containing the maximum number of induced 5-cycles}
\label{fig:extremal graph}
\end{figure}
\end{definition}

After a somewhat tedious check, one can see that when $n$ is sufficiently large, then the only induced $C_5$ in $H_n$ which does not contain $u$ is $w_2w_3x_{4,3}x_4x_{2,4}$. Moreover, the induced $C_5$'s containing $u$ also contain precisely two of the $w_i$'s. The number of induced $C_5$'s containing $u$, $w_1$ and $w_2$ is $(|S_1|+1)(|S_2|+1)$. Indeed, the common neighbour of $u$ and $w_1$ in such an induced $C_5$ can be any vertex from $S_1\cup \{x_{3,1}\}$ (but not $x_{1,2}$ because it is a neighbour of $w_2$), and the common neighbour of $u$ and $w_2$ can be any vertex from $S_2\cup \{x_{2,4}\}$. Similarly, the number of induced $C_5$'s containing $u$, $w_1$ and $w_3$ is $(|S_1|+1)(|S_3|+1)$, the number of induced $C_5$'s containing $u$, $w_2$ and $w_3$ is $(|S_2|+2)(|S_3|+2)$, the number of induced $C_5$'s containing $u$, $w_2$ and $w_4$ is $2(|S_2|+1)$, the number of induced $C_5$'s containing $u$, $w_3$ and $w_4$ is $2(|S_3|+1)$ and there is no $C_5$ containing $u$, $w_1$ and $w_4$. Altogether, we find that the number of induced $C_5$'s in $H_n$ is
\begin{align*}
    &1+(|S_1|+1)(|S_2|+1)+(|S_1|+1)(|S_3|+1)+(|S_2|+2)(|S_3|+2)+2(|S_2|+1)+2(|S_3|+1) \\
    &= (|S_1|+4)(|S_2|+1)+(|S_1|+4)(|S_3|+1)+(|S_2|+1)(|S_3|+1)+2.
\end{align*}
Since $(|S_1|+4)+(|S_2|+1)+(|S_3|+1)=n-4$ and $|S_1|+4,|S_2|+1,|S_3|+1$ are as close as possible, the above expression is equal to $h(n)$.

It remains to prove the upper bound in Theorem \ref{maxindc5}. This is done by an induction argument. The key result is the following.

\begin{proposition} \label{prop:combine all}
    There exists a positive integer $n_1$ such that the following holds. Let $n\geq n_1$ and let $G$ be an $n$-vertex planar graph which has $h(n)+t$ induced $5$-cycles for some $t\geq 1$. Then either $G$ has a subgraph on $n-1$ vertices which has at least $h(n-1)+t+1$ induced $5$-cycles, or $G$ has a subgraph on $n-3$ vertices which has at least $h(n-3)+t+1$ induced $5$-cycles.
\end{proposition}

Let us see how this proposition implies our main result.

\begin{proof}[Proof of Theorem \ref{maxindc5}]
We have already given a construction which has $h(n)$ induced $5$-cycles, so it suffices to prove the upper bound. Let $n_1$ be the positive integer provided by Proposition~\ref{prop:combine all}. Define $n_0=n_1+3n_1^5$ and let $G$ be a planar graph on $n\geq n_0$ vertices. Assume, for contradiction, that $G$ contains more than $h(n)$ induced $5$-cycles. Let the number of $5$-cycles in $G$ be $h(n)+t$ for some $t\geq 1$. By Proposition \ref{prop:combine all}, $G$ has a subgraph $G'$ on $n'\geq n-3$ vertices which has at least $h(n')+t+1$ induced $5$-cycles. Again by Proposition \ref{prop:combine all}, $G'$ has a subgraph $G''$ on $n''\geq n'-3\geq n-6$ vertices which has at least $h(n'')+t+2$ induced $5$-cycles. Repeat this as long as the subgraph has at least $n_1$ vertices. Eventually, we are left with a subgraph $G_{\textrm{final}}$ on $n_{\textrm{final}}<n_1$ vertices which has at least $h(n_{\textrm{final}})+t+n_1^5$ induced $5$-cycles. This is clearly a contradiction.
\end{proof}

The rest of the paper is devoted to proving Proposition \ref{prop:combine all}. Note that if there exists a vertex which is contained in fewer than $h(n)-h(n-1)$ induced $5$-cycles, then we can remove this vertex and we are done. Similarly, if there are three vertices such that their removal deletes fewer than $h(n)-h(n-3)$ induced $5$-cycles, then we can remove these vertices and we are done. Hence, in what follows, we can assume that such vertices do not exist. We will use these assumptions to find more and more structure in our graph. Eventually, the structure of the remaining possibilities will be so restricted that we can directly bound the number of induced $5$-cycles, and thereby reach a contradiction.

We will see by straightforward computations that $h(n)-h(n-1)>2n/3-4$ and $h(n)-h(n-3)=2n-11$.

In Section \ref{sec:find k27}, we prove that if (a drawing of) our graph does not contain a $K_{2,7}$ which is ``empty", i.e. which has no other vertices inside, then there is a vertex which is contained in at most $11n/20$ induced $5$-cycles. Since $11n/20$ is less than $h(n)-h(n-1)$ for large $n$, this means that we can assume that our graph does contain an empty $K_{2,7}$.

In Section \ref{sec:one vx removal}, we reveal even more structure in our graph by using that there is no vertex which is contained in less than $2n/3-10$ induced $5$-cycles (note that $2n/3-10<h(n)-h(n-1)$). For example, Lemma~\ref{lem:three empty k27s} provides us with a structure that already starts to resemble the near-extremal graph depicted in Figure \ref{x=5}.

Since the value of $h(n)-h(n-1)$ depends on the remainder of $n$ modulo $3$, it is not convenient to remove just one vertex when we are already very close to the extremal example. Instead, in Section \ref{sec:three vx removal}, we carefully choose three vertices whose removal does not decrease the number of induced $5$-cycles by too much. More precisely, in Lemma \ref{lem:two scenarios}, we prove that in a graph which has all the properties that we have already obtained by the earlier lemmas, either there are three vertices whose removal deletes fewer than $h(n)-h(n-3)$ induced $5$-cycles, or the graph has a very specific structure (and is very similar to the extremal graph $H_n$).

In Section \ref{sec:finishing}, we show that if the graph has the structure forced by Lemma \ref{lem:two scenarios}, then it has at most $h(n)$ induced $5$-cycles, completing the proof of Proposition \ref{prop:combine all}.

\section{Preliminaries}
Let $G$ be a planar graph on $n$ vertices.   We denote the edge and vertex sets of $G$ with $V(G)$ and $E(G)$, respectively.   For a vertex $v\in V(G)$, we denote its degree by $d_G(v)$.   The set of its neighbours is denoted as $N_G(v)$.   For simplicity, we drop the subscript whenever the graph is clear.

We start with a basic lemma, which we are going to use throughout the paper.
\begin{lemma} \label{basicbound}
	Let $G$ be a planar graph, let $v\in V(G)$, and let $u$ and $w$ be distinct neighbours of $v$.   Let $X_0=N(u)\setminus (N(w)\cup \{w\})$ and let $Y_0=N(w)\setminus (N(u)\cup \{u\})$.   Let $X$ be the subset of $X_0$ consisting of those vertices that have at least one neighbour in $Y_0$, and let $Y$ be the subset of $Y_0$ consisting of those vertices that have at least one neighbour in $X_0$. Let $\mu$ be the number of connected components in the induced bipartite subgraph of $G$ with parts $X$ and $Y$. Then the number of induced $C_5$'s in $G$ containing $u$, $v$ and $w$ is at most $|X|+|Y|-\mu$. In particular, it is always at most $|X|+|Y|-1$.
\end{lemma}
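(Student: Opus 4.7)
My plan is to identify the induced $C_5$'s through $u,v,w$ with certain edges in a planar bipartite auxiliary graph, then apply the standard edge bound for planar bipartite graphs. First, since $v$ is adjacent to both $u$ and $w$, any induced $5$-cycle containing $u,v,w$ must have the form $v-u-x-y-w-v$, and the induced condition together with $u,w\in N(v)$ forces $x\in X_{0}$, $y\in Y_{0}$, $xy\in E(G)$, and in addition $u\not\sim w$. In particular $x\in X$ and $y\in Y$, so the number of such induced cycles is at most $e(X,Y)$, the number of edges of $G$ between $X$ and $Y$. If $u\sim w$ there are no induced $C_5$'s through $u,v,w$ at all and we are done, so from now on I assume $u\not\sim w$.

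Next, I would build the auxiliary graph $H'$ on vertex set $\{u,w\}\cup X\cup Y$, with edge set consisting of all $X$-$Y$ edges present in $G$ together with all edges from $u$ to $X$ and from $w$ to $Y$. As a subgraph of $G$, $H'$ is planar. Because $X\subseteq N(u)\setminus N(w)$ and $Y\subseteq N(w)\setminus N(u)$, the only edges are of the listed types, and $H'$ is bipartite with parts $\{u\}\cup Y$ and $\{w\}\cup X$.

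The crucial step is to add the edge $uw$ to $H'$ while preserving both planarity and bipartiteness. Bipartiteness is immediate since $u$ and $w$ lie in opposite parts. For planarity, I consider the subgraph $H''=H'\cup\{v,uv,vw\}$ of $G$; it inherits a planar embedding from $G$, and $v$ has degree $2$ in $H''$. Smoothing $v$ (replacing $v$ and the two edges $uv,vw$ by a single edge from $u$ to $w$ drawn along the original $u$-$v$-$w$ arc) yields a planar embedding of $H'+uw$, and this graph is simple since $uw\notin E(G)$. Applying $|E|\le 2|V|-4$ for simple bipartite planar graphs to $H'+uw$ gives
\[
e(X,Y)+|X|+|Y|+1 \;\le\; 2(|X|+|Y|+2)-4,
\]
whence $e(X,Y)\le |X|+|Y|-1$, as required.

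The only genuine (and minor) obstacle is the planarity argument for $H'+uw$, but the smoothing trick via the degree-$2$ vertex $v$ handles it cleanly; everything else is bookkeeping.
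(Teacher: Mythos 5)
Your proof is correct but takes a genuinely different route from the paper's. The paper shows directly that the bipartite graph of $X$--$Y$ edges is \emph{acyclic}: a cycle $x_1 y_1 x_2 y_2 \cdots x_k y_k x_1$ would yield a subdivision of $K_{3,3}$ with parts $\{u,y_1,y_2\}$ and $\{w,x_1,x_2\}$, using $v$ as the internal vertex of the $u$--$w$ path and the remaining arc $y_2 x_3 \cdots x_k y_k x_1$ of the cycle as the $x_1$--$y_2$ path, contradicting planarity; the forest bound $e\le |X|+|Y|-1$ then finishes. You instead augment the $X$--$Y$ bipartite graph with the $u$--$X$ and $w$--$Y$ pendant edges, smooth the degree-$2$ vertex $v$ to adjoin the edge $uw$ (valid since $u\not\sim w$, which you correctly reduce to), and apply $|E|\le 2|V|-4$ to the resulting simple bipartite planar graph on $|X|+|Y|+2$ vertices; the arithmetic gives exactly $e(X,Y)\le |X|+|Y|-1$. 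Both approaches are sound. The paper's argument yields the stronger structural fact that the $X$--$Y$ graph is a forest, while yours outsources the sparsity to the standard Euler-formula bound; the smoothing observation is a clean way to make $uw$ available without losing bipartiteness or planarity. One shared caveat, which affects the paper's proof equally: when $|X|+|Y|=0$ the claimed bound is $-1$ while the true count is $0$, so the lemma implicitly assumes $|X|+|Y|\ge 1$ (in your version this surfaces as the $|V|\ge 3$ hypothesis in the bipartite planar bound). This is vacuous in every application.
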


\begin{proof}
	Clearly any such $C_5$ contains precisely one vertex from each of $X$ and $Y$.   Hence, the number of such induced $C_5$'s is at most the number of edges between $X$ and $Y$.
	However, the induced bipartite subgraph of $G$ with parts $X$ and $Y$ is acyclic.   Indeed, suppose that there is a cycle $x_1y_1x_2y_2\dots x_ky_kx_1$ with $x_i\in X$ and for all $i$ and $y_j\in Y$ for all $j$.   The subgraph of $G$ with vertices $u,v,w,x_1,y_1,\dots,x_k,y_k$ and edges $uv,vw,ux_1,ux_2,wy_1,wy_2,x_1y_1,y_1x_2,x_2y_2,y_2x_3,\dots,y_kx_1$ is a subdivision of $K_{3,3}$ with the parts being $\{u,y_1,y_2\}$ and $\{w,x_1,x_2\}$. Indeed, the only edge of this $K_{3,3}$ which is potentially not present in $G$ is $x_1y_2$, but we have a path $y_2x_3y_3\dots x_ky_kx_1$ in $G$. Hence, $G$ is not planar by (the easier direction of) Kuratowski’s theorem \cite{kuratowski1930probleme},  which is a contradiction.   Thus, the induced bipartite subgraph of $G$ with parts $X$ and $Y$ is a forest. The statement follows.
\end{proof}

\section{Finding an empty $K_{2,7}$} \label{sec:find k27}

In this section we prove that if $G$ does not contain an empty $K_{2,7}$, then there is even a vertex which is contained in at most $11n/20$ induced $C_5$'s. Here an empty $K_{2,7}$ in a drawing of $G$ means distinct vertices $u$ and $w$, and $z_1,\dots,z_7\in N(u)\cap N(w)$ in natural order such that the bounded region with boundary consisting of $uz_1$, $z_1w$, $wz_7$ and $z_7u$ contains no vertex other than $z_2,\dots,z_6$.

\begin{lemma} \label{nok27}
	Let $n$ be sufficiently large and let $G$ be a plane graph on $n$ vertices.   If $G$ does not contain an empty (not necessary induced) $K_{2,7}$, then there is a vertex in $G$ which is contained in at most $11n/20$ induced $C_5$'s.
\end{lemma}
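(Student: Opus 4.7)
My plan is to combine Lemma \ref{basicbound} with a global double-counting argument, aiming to upper-bound the total number $T$ of induced $C_5$'s by $\tfrac{11}{100}n^2 + O(n)$. Since $\sum_{v \in V(G)} c_5(v) = 5T$ (each induced $C_5$ is counted five times, once per vertex), averaging over the $n$ vertices then yields a vertex $v$ with $c_5(v) \leq \tfrac{11n}{20} + O(1)$, which is the conclusion.

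The first step is to apply Lemma \ref{basicbound} to every triple $(v,u,w)$ with $uv, vw \in E(G)$ and $uw \notin E(G)$, giving
\[
5T \leq \sum_{v \in V(G)}\ \sum_{\{u,w\} \subseteq N(v),\, uw \notin E} \bigl(|X(u,v,w)| + |Y(u,v,w)| - 1\bigr).
\]
Switching the order of summation, for each non-adjacent pair $\{u,w\}$ I would sum over common neighbors $v \in N(u) \cap N(w)$. The task reduces to bounding, for each such pair, the aggregate $\sum_{v \in N(u) \cap N(w)} (|X(u,v,w)| + |Y(u,v,w)|)$ in the planar drawing of $G$.

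This is where the no-empty-$K_{2,7}$ hypothesis plays the decisive role: in the planar drawing, the common neighbors of $u$ and $w$ sit in a cyclic order around $u$, and the hypothesis forbids seven consecutive common neighbors from occupying a single otherwise-empty quadrilateral region with vertices $u, z_1, w, z_7$. This density restriction forces either few common neighbors or additional non-common-neighbor vertices interspersed among them, each of which limits how the sets $X$ and $Y$ can be populated. Coupled with planar-graph constraints (for example Euler's formula applied to the subgraph induced by $\{u,w\} \cup N(u) \cup N(w)$), this should yield the desired aggregate bound.

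The main obstacle is obtaining the precise constant $\tfrac{11}{20}$ rather than a looser qualitative improvement. I expect the argument to require a careful case analysis by the size of $|N(u) \cap N(w)|$ and by the degrees of $u$ and $w$, with the sharpest savings appearing in the ``dense common-neighborhood'' case, where the $K_{2,7}$-free condition forces interspersed vertices that shrink $X$ and $Y$. Should global averaging prove too lossy, a back-up plan is to identify directly a low-degree vertex $v$ (which exists by planarity, with $d(v) \leq 5$) and bound $c_5(v)$ by applying Lemma \ref{basicbound} to each of the at most $\binom{5}{2}=10$ pairs of its neighbors, using the no-empty-$K_{2,7}$ restriction to control each $|X|+|Y|$ individually.
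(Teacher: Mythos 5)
Your proposal takes a genuinely different route from the paper, but it contains a substantial gap that is not addressed. The paper proves Lemma \ref{nok27} by a dichotomy: if $G$ has no $K_{2,n/10^6}$ then Lemma \ref{findk2m} exhibits a low-degree vertex $v$ and, after an intricate region argument (choosing a region $R_i\cup R_{i+1}$ with few vertices and locating a vertex $z$ of bounded degree inside), shows $z$ is in at most $n/2$ induced $C_5$'s; if $G$ does contain a large $K_{2,m}$ then Corollary \ref{emptyk27}, via Lemma \ref{emptyorfull}'s analysis of nearly-empty quadrilateral regions, either produces an empty $K_{2,7}$ or a vertex $t_4$ in at most $11n/20$ induced $C_5$'s. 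Both arms locate an explicit special vertex. Your plan instead tries to prove the \emph{stronger} statement $T\leq\frac{11}{100}n^2+O(n)$ and then average.

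The gap is that you never actually establish the aggregate bound. You correctly set up $5T\leq\sum_{v}\sum_{\{u,w\}\subseteq N(v),\,uw\notin E}\bigl(|X|+|Y|-1\bigr)$ and rewrite it as $\sum_{\{u,w\}\text{ non-adj.}}|N(u)\cap N(w)|\cdot\bigl(|X(u,w)|+|Y(u,w)|-1\bigr)$ (note $X,Y$ depend only on the pair $\{u,w\}$, not on $v$), but then you only assert that the no-empty-$K_{2,7}$ hypothesis ``should yield the desired aggregate bound'' and explicitly concede that obtaining the constant $\frac{11}{20}$ is the unresolved ``main obstacle.'' That is precisely the content of the lemma, not a detail to be deferred: the hypothesis gives you no direct control over $|X(u,w)|+|Y(u,w)|$ (these sets live in $N(u)\setminus N(w)$ and $N(w)\setminus N(u)$ and can each have size $\Theta(n)$ even when $N(u)\cap N(w)$ is moderate), and a vague appeal to ``Euler's formula on the subgraph induced by $\{u,w\}\cup N(u)\cup N(w)$'' does not produce the per-pair saving you need. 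You are also proving more than required — the lemma only needs one good vertex, and it is far from clear that forbidding empty $K_{2,7}$'s forces the global count below $\frac{11}{100}n^2$; nothing in the paper suggests this, and establishing it would require essentially redoing the whole theorem under an extra hypothesis.

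Your back-up plan (take $v$ with $d(v)\leq 5$ and bound $|X|+|Y|$ for each of its $\leq 10$ non-adjacent neighbour pairs) has the same problem in sharper form: to conclude $c_5(v)\leq 11n/20$ you would need each $|X|+|Y|\lesssim n/20$, but no such bound follows from the absence of an empty $K_{2,7}$. Indeed the paper's Lemma \ref{findk2m} does not bound $|X|+|Y|$; it \emph{assumes} $|X|+|Y|\geq n/20$ for some pair and then derives, via a maximal independent matching inside the $X$--$Y$ forest and a careful choice of a low-vertex region, that $|N(u)\cap N(w)|$ must be huge — exactly the structural step your sketch is missing. So the plan as written does not constitute a proof: the step where the hypothesis is converted into a quantitative bound is not carried out, and it is the entire difficulty of the lemma.
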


To prove this, we need some preliminaries.

\begin{lemma} \label{findk2m}
	Let $n$ be sufficiently large and let $G$ be a planar graph on $n$ vertices.   If $G$ does not contain distinct vertices $u$ and $w$ with $|N(u)\cap N(w)|\geq n/10^6$, then there is a vertex in $G$ which is contained in at most $n/2$ induced $C_5$'s.
\end{lemma}

\begin{proof}
	Suppose otherwise. Let $v$ be a vertex of degree at most $5$ in $G$. Such vertex exists since the average degree in a planar graph is strictly less than $6$ by Euler's formula.   Then by the pigeonhole principle $v$ has distinct non-adjacent neighbours $u$ and $w$ such that the number of induced $C_5$'s containing $u$, $v$ and $w$ is at least $n/20$.   Define $X$ and $Y$ as in Lemma \ref{basicbound}.   By the same lemma, we have $|X|+|Y|\geq n/20$.   Let $G'$ be the induced bipartite subgraph of $G$ with parts $X$ and $Y$.   By assumption, there is no vertex of degree at least $n/10^6$ in $G'$.   Then since $G'$ has at least $\frac{|X|+|Y|}{2}\geq n/40$ edges, there must exist a set of at least $10^4$ independent edges in $G'$.
	
	Let them be $x_1y_1,x_2y_2,\dots,x_{10^4}y_{10^4}$ such that $x_1,x_2,\dots,x_{10^4}\in X$, the edges $ux_1,ux_2,\dots,ux_{10^4}$ are in anti-clockwise order, and the bounded region with boundary consisting of edges $ux_1$, $x_1y_1,y_1w,wy_{10^4},y_{10^4}x_{10^4},x_{10^4}u$ contains all $x_i$ and $y_i$. For $1\leq i\leq 10^4-1$, let $R_i$ be the bounded region with boundary consisting of $ux_i,x_iy_i,y_iw,wy_{i+1},y_{i+1}x_{i+1},x_{i+1}u$. Choose $11\leq i\leq 10^4-12$ such that the number of vertices in $R_{i-10}\cup R_{i-9}\cup \dots \cup R_{i+11}$ is at most $n/300$. Let $R=R_i\cup R_{i+1}$.
	
	Let $S$ be the set of vertices of $G$ in the interior of $R$ which do not belong to $N(u)\cap N(w)$. Note that $x_{i+1}\in S$, so $S\neq \emptyset$. Now the graph $G''=G\lbrack S\rbrack$ is planar, so there exists some $z\in S$ which has degree at most $5$ in $G''$. But it is joined to at most $2$ elements of $N(u)\cap N(w)$, so it has at most $7$ neighbours in the interior of $R$. Hence (together with $u,x_i,y_i,w,y_{i+2}$ and $x_{i+2}$), $z$ has at most $13$ neighbours.
	
By assumption, $z$ is contained in at least $n/2$ induced $C_5$'s. We claim that any such $C_5$ is either contained entirely in $R_{i-10}\cup R_{i-9}\cup \dots \cup R_{i+11}$ or it contains both $u$ and $w$. Indeed, let $C$ be an induced $5$-cycle containing $z$ which leaves the region $R_{i-10}\cup R_{i-9} \dots \cup R_{i+11}$ and let $q$ be a vertex of $C$ outside $R_{i-10}\cup R_{i-9}\cup \dots \cup R_{i+11}$. Then $C$ in the union of two internally vertex-disjoint paths of length at most $5$ between $z$ and $q$. However, since $z$ is inside $R_i\cup R_{i+1}$ and $q$ is outside $R_{i-10}\cup R_{i-9}\cup \dots \cup R_{i+11}$, any such path must pass through either $u$ or $w$. Hence, $C$ contains both $u$ and $w$.

If an induced $5$-cycle is contained in $R_{i-10}\cup R_{i-9}\cup \dots \cup R_{i+11}$, then it can only use a set of at most $n/300$ vertices, and since $z$ has degree at most $13$, by Lemma \ref{basicbound} there are at most ${13 \choose 2}\cdot n/300<n/3$ such induced $C_5$'s.   So there are at least $n/6$ induced $C_5$'s containing $z$, $u$ and $w$. Recall that $u$ and $w$ are non-adjacent and $z\not \in N(u)\cap N(w)$.  If $z\in N(u)$, then all these induced $C_5$'s are of the form $uzswt$ for some $s\in N(z)$ and $t\in N(u)\cap N(w)$, while if $z\in N(w)$, then all these induced $C_5$'s are of the form $uszwt$ for some $s\in N(z)$ and $t\in N(u)\cap N(w)$.   In either case, since $|N(z)|\leq 13$, it follows that $|N(u)\cap N(w)|\geq \frac{n}{6\cdot 13}> \frac{n}{10^6}$.   This contradicts the condition in the lemma.
\end{proof}
\begin{lemma} \label{emptyorfull}
	Let $n$ be sufficiently large and let $G$ be a plane graph on $n$ vertices. Let $u$ and $w$ be distinct vertices, and let $v_1,v_2,\dots,v_6$ be some of their common neighbours, in natural order. Assume that the number of vertices in the interior of the bounded region with boundary consisting of $uv_3$, $v_3w$, $wv_4$ and $v_4u$ is at least one but at most $n^{1/5}$ and that there is no common neighbour of $u$ and $w$ in the same region. Then $G$ has a vertex which is contained in at most $11n/20$ induced $C_5$'s.
\end{lemma}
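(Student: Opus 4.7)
The plan is to exhibit a low-degree vertex $z$ strictly inside $R$ and show that $z$ lies in at most $11n/20$ induced $C_5$'s. Since the interior of $R$ is nonempty and has at most $n^{1/5}$ vertices, the induced subgraph on this interior is planar and thus contains a vertex $z$ with at most $5$ neighbours inside $R$. Because $R$ is bounded by the $4$-cycle $uv_3wv_4$ and $z$ lies strictly in the interior, planarity forces every neighbour of $z$ to lie in the closed region $\overline R$, so $|N(z)| \le 5+4 = 9$. The hypothesis that no common neighbour of $u$ and $w$ lies in the interior of $R$ yields $|N(z)\cap\{u,w\}| \le 1$, refining the bound to $|N(z)| \le 8$.

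I then count induced $C_5$'s through $z$ by classifying the non-adjacent pair $\{a,d\} \subseteq N(z)$ of the two neighbours of $z$ in the cycle. When both $a,d$ are strictly interior to $R$, the whole cycle lies in $\overline R$ (since every neighbour of an interior vertex is again in $\overline R$), giving only $O(n^{4/5}) = o(n)$ such cycles. When exactly one of $a,d$ is a corner, the interior side of the cycle remains in $\overline R$; applying Lemma \ref{basicbound} to the triple $(a,z,d)$ (whose $Y$-set sits inside $\overline R$ and so has size $O(n^{1/5})$) and treating separately the sub-case in which the cycle escapes via a second corner, this contribution is also $o(n)$.

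The decisive case is when both $a$ and $d$ are corners. Since $u\sim v_3,\ u\sim v_4,\ w\sim v_3,\ w\sim v_4$, the only non-adjacent pairs of corners are $\{u,w\}$ and $\{v_3,v_4\}$; the former is excluded by the choice of $z$, so the case arises only if $v_3v_4 \notin E(G)$ and $v_3,v_4 \in N(z)$. (If $v_3 \sim v_4$, this case is empty and we are done.) Lemma \ref{basicbound} applied to $(v_3,z,v_4)$ bounds the count here by $|X|+|Y|-1$, with $X \subseteq N(v_3)\setminus N(v_4)$ and $Y \subseteq N(v_4)\setminus N(v_3)$.

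The main obstacle is to prove $|X|+|Y| \le 11n/20 + o(n)$ in this last case. I would use that $u, w \in N(v_3)\cap N(v_4)$, so $u$ and $w$ are excluded from $X \cup Y$, and that the outside common neighbours $v_1,v_2,v_5,v_6$ of $u,w$ together with the planar embedding restrict where the neighbours of $v_3$ and $v_4$ may lie. A further appeal to Lemma \ref{basicbound} for auxiliary triples such as $(v_3,u,v_4)$ and $(v_3,w,v_4)$, combined with the forest bound there, should deliver the required estimate, with the constant $11/20$ emerging from aggregating the contributions of the sub-cases.
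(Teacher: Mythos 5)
Your overall strategy differs from the paper's in a fundamental way, and it contains a genuine gap. You try to exhibit a single interior vertex $z$ and directly bound the induced $C_5$'s through it; the paper instead argues by contradiction, first deriving the structural fact $|N(u)\cap N(w)|\geq n/2$ from the assumption that every interior vertex has many induced $C_5$'s, and then pivoting to a \emph{different} vertex (a carefully chosen common neighbour $t_4$ of $u$ and $w$) that provably has at most $n/2+O(1)\leq 11n/20$ induced $C_5$'s.

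The gap in your argument lies in the sub-case you dismiss as ``the cycle escapes via a second corner.'' Suppose $a=u$ is a corner, $d$ is interior, and the cycle exits through $w$: the $C_5$ is $u$--$z$--$d$--$w$--$q$--$u$, where $q$ ranges over $N(u)\cap N(w)$ (with a few elements excluded). Nothing in the hypotheses stops $|N(u)\cap N(w)|$ from being $\Theta(n)$; the hypothesis only forbids common neighbours of $u,w$ from lying in the \emph{interior of} $R$. So your chosen $z$ can lie on $\Omega(n)$ induced $C_5$'s of this type, and your claim that this contribution is $o(n)$ is false. Applying Lemma~\ref{basicbound} to the triple $(u,z,d)$ does not rescue this: the $Y$-side is small (inside $\bar R$), but if $w\in Y_0$ then $X$ can contain most of $N(u)\cap N(w)$, and a forest with one tiny side can still have $|X|+|Y|-1=\Omega(n)$ edges (a star). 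Separately, the ``main obstacle'' you flag (the $\{v_3,v_4\}$ case) is actually not an obstacle: by planarity $N(v_3)\setminus\{u,w\}\subseteq \overline{R_2}\cup\overline R$ and $N(v_4)\setminus\{u,w\}\subseteq \overline R\cup\overline{R_4}$, which forces the fourth and fifth cycle vertices into $\overline R$, giving only $O(n^{2/5})$ such $C_5$'s. So your worry is misplaced while the real problem goes unaddressed. The paper's two-step contradiction argument is what makes the bound $11n/20$ attainable precisely because, once $|N(u)\cap N(w)|\geq n/2$ is established, the complement $|N(u)\triangle N(w)|\leq n/2$ controls the $C_5$'s through $t_4$ via Lemma~\ref{basicbound}; a direct count at an interior vertex cannot do this.
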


\begin{proof}
	Suppose otherwise. Let $R$ be the bounded region with boundary consisting of $uv_3$, $v_3w$, $wv_4$ and $v_4u$. Let $x$ be an arbitrary vertex inside $R$. By assumption, $x\not \in N(u)\cap N(w)$. Since there are at most $n^{1/5}+4$ vertices in $R$ (including its boundary), the number of induced $C_5$'s containing $x$ which lie entirely in $R$ (possibly touching the boundary) is at most $(n^{1/5}+4)^4\leq n/20$. Thus, since $x$ is contained in at least $11n/20$ induced $C_5$'s, there exist at least $n/2$ induced $C_5$'s containing $x$ which contain vertices outside $R$.
	
	Take such an induced $C_5$ and call it $C$. We claim that $C$ must contain both $u$ and $w$, but does not contain $v_3$ and $v_4$. Indeed, if we go through the vertices of $C$ one by one in natural order, starting with $x$, then there will be a vertex from the set $\{u,v_3,w,v_4\}$ right before the walk first leaves $R$, and then one in the same set when the walk first returns to $R$. Call these two vertices $y$ and $z$, respectively. Since $C$ contains the vertex $x$, which is in the interior of $R$, it follows that $y$ and $z$ are not neighbours in $C$, so they are also not neighbours in $G$. Thus, either $\{y,z\}=\{u,w\}$ or $\{y,z\}=\{v_3,v_4\}$. In the latter case, again since $C$ is induced and contains $x$, $C$ contains neither $u$ nor $w$. So there exists a path of length at most $3$ in $C$, and therefore also in $G$, from $v_3$ to $v_4$ outside of $R$ which avoids both $u$ and $w$. This is clearly not possible because of the vertices $v_1$, $v_2$, $v_5$ and $v_6$.
	
	Thus, $C$ indeed contains both $u$ and $w$, and it is easy to see that it does not contain $v_3$ and $v_4$. Since $x\not \in N(u)\cap N(w)$, it follows that either $x\in N(u)$ and $C=uxqwr$ for some $q\in N(x)\cap N(w)\setminus \{v_3,v_4\}$ and $r\in N(u)\cap N(w)$, or $x\in N(w)$ and $C=uqxwr$ for some $q\in N(x)\cap N(u)\setminus \{v_3,v_4\}$ and $r\in N(u)\cap N(w)$. In particular, it follows that $N(u)$ and $N(w)$ both have vertices in the interior of $R$.
	
	Let $X$ be the set of vertices of $N(u)$ in the interior of $R$ and let $Y$ be the set of vertices of $N(w)$ in the interior of $R$. Similarly as in the proof of Lemma \ref{basicbound}, the induced bipartite subgraph of $G$ with parts $X$ and $Y$ is acyclic. Thus, there is a vertex in that graph of degree at most one. Without loss of generality, we may assume that some $x\in X$ has at most one neighbour in $Y$. Then, by the previous paragraph, there are at most $|N(u)\cap N(w)|$ induced $C_5$'s containing $x$ as well as vertices outside $R$. Thus, by the first paragraph, $|N(u)\cap N(w)|\geq n/2$.
    
    By a simple averaging, it follows that there exist distinct $t_1,t_2,\dots,t_7\in N(u)\cap N(w)$ (in natural order) such that the region $S$ bounded by $ut_1,t_1w,wt_7,t_7u$ contains at most $100$ vertices.  Now any induced $C_5$ which contains $t_4$ and has vertices outside $S$ must contain $u$ and $w$.  Such an induced $C_5$ cannot contain any vertices from $N(u)\cap N(w)$ other than $t_4$, so by Lemma \ref{basicbound}, there are at most $n/2$ such induced $C_5$'s.  The number of induced $C_5$'s containing $t_4$ but no vertices outside $S$ is at most $100^5$, so $t_4$ satisfies the conclusion of the lemma.
\end{proof}

\begin{corollary} \label{emptyk27}
	Let $n$ be sufficiently large and let $G$ be a plane graph on $n$ vertices. Let $u$ and $w$ be distinct vertices with $|N(u)\cap N(w)|\geq 7n^{4/5}$. Then either $G$ has an empty $K_{2,7}$ whose part of size two is $\{u,w\}$ or there is a vertex in $G$ which is contained in at most $11n/20$ induced $C_5$'s.
\end{corollary}

\begin{proof}
	Let $v_1,v_2,\dots,v_{t}$ be the common neighbours of $u$ and $w$ in natural order.  For each $1\leq i\leq t-1$, let $R_i$ be the bounded region with boundary consisting of the edges $uv_i$, $v_iw$, $wv_{i+1}$ and $v_{i+1}u$.  By Lemma \ref{emptyorfull}, each $R_i$ with $3\leq i\leq t-3$ contains either zero or at least $n^{1/5}$ vertices in its interior.  Hence, the number of non-empty $R_i$'s is at most $n^{4/5}+4$. Since $t\geq 7n^{4/5}$, there exists some $1\leq i \leq t-6$ for which $u,w,v_i,v_{i+1},\dots,v_{i+6}$ define an empty $K_{2,7}$.
\end{proof}

\medskip

Now Lemma \ref{nok27} follows from Lemma \ref{findk2m} and Corollary \ref{emptyk27}.

\section{The rough structure of near-extremal graphs} \label{sec:one vx removal}

\begin{lemma} \label{lem:compare n and n-1}
    Let $n\geq 8$. Then $h(n)-h(n-1)>2n/3-4$.
\end{lemma}

\begin{proof}
Choose $k_1,k_2,k_3\in \mathbb{N}$ such that $k_1+k_2+k_3=n-4$, $k_1,k_2,k_3$ are as close as possible and $k_1\geq k_2\geq k_3$. Then $h(n)=k_1k_2+k_2k_3+k_3k_1+2$ and $h(n-1)=(k_1-1)k_2+k_2k_3+k_3(k_1-1)+2$, so $h(n)-h(n-1)=k_2+k_3\geq \frac{2}{3}(k_2+k_3+(k_1-1))=\frac{2}{3}(n-5)>2n/3-4$.
\end{proof}

Lemma \ref{lem:compare n and n-1} means that we are happy (when proving Proposition \ref{prop:combine all}) if we can find a vertex which is contained in at most, say, $2n/3-10$ induced $5$-cycles. The next lemma guarantees some structure if such a vertex does not exist.

\begin{lemma} \label{structureofexc}
	Let $n$ be sufficiently large and let $G$ be a plane graph on $n$ vertices. Suppose that $G$ has at least $\frac{5}{18}n^2$ induced $C_5$'s and that there does not exist a vertex in $G$ which is contained in at most $2n/3-10$ induced $C_5$'s. Let $u$ and $w$ be the two vertices in the part of size $2$ of an empty $K_{2,7}$. Then $u$ and $w$ are non-adjacent and there exist sets $X\s N(u)\setminus N(w)$ and $Y\s N(w)\setminus N(u)$ with the following properties.
	
	\begin{enumerate}
		\item $|X|+|Y|\geq 2n/3-10$, every $x\in X$ is adjacent to at least one element of $Y$ and every $y\in Y$ is adjacent to at least one element of $X$.
		\item The bipartite induced subgraph of $G$ with parts $X$ and $Y$ has maximum degree at least $n^{5/6}$. \label{prop:maxdeg}
	\end{enumerate}
\end{lemma}

\begin{proof}
	Let $v$ be the centre vertex in the part of size $7$ in the empty $K_{2,7}$. Define $X$ and $Y$ as in the statement of Lemma \ref{basicbound}. Since every induced $C_5$ containing $v$ also contains $u$ and $w$, and by assumption $v$ is contained in more than $2n/3-10$ induced $C_5$'s, it follows by Lemma \ref{basicbound} that $|X|+|Y|>2n/3-10$. Moreover, since there exists an induced $C_5$ containing $u$, $v$ and $w$, it follows that $u$ and $w$ are non-adjacent.
	
	Let $G'$ be the induced bipartite subgraph of $G$ with parts $X$ and $Y$. It remains to show that $G'$ has maximum degree at least $n^{5/6}$. Suppose otherwise.
	
	Let $x\in X$ be an arbitrary vertex.  We give an estimate for the number of induced $C_5$'s containing $x$.  We first count those $C_5$'s which contain both $u$ and $w$ as vertices.  Let us call these \emph{type 1} $C_5$'s.  Since $w$ is non-adjacent to both $x$ and $u$, the number of type 1 $C_5$'s containing $x$ is at most $d_{G'}(x)\cdot t$, where $d_{G'}(x)$ is the degree of $x$ in $G'$ and $t=|N(u)\cap N(w)|$.
	
	Call those induced $C_5$'s which do not contain both $u$ and $w$ \emph{type 2}.  
	To bound the number of such $C_5$'s, we will use the following claim.
	
	\medskip
	
	\begin{claim}\label{Claim2}
	For every $q\in V(G)$, the number of vertices $z\in X\cup Y$ for which there exists a path of length at most $3$ between $q$ and $z$ avoiding both $u$ and $w$ is at most $100n^{5/6}$.
	\end{claim}
	
	\begin{proof}
	    Take a maximal matching between $X$ and $Y$.  Let the edges in this matching be $x_{i_1}y_{i_1},\dots,x_{i_s}y_{i_s}$ such that $x_{i_j}\in X, y_{i_j}\in Y$ and the edges $wy_{i_1},\dots,wy_{i_s}$ are in clockwise order.  For each $1\leq j\leq s-1$, let $R_j$ be the bounded region with boundary consisting of the edges $ux_{i_j},x_{i_j}y_{i_j},y_{i_j}w,wy_{i_{j+1}},y_{i_{j+1}}x_{i_{j+1}},x_{i_{j+1}}u$, and let $R_0$ be the unbounded region with boundary consisting of the edges $ux_{i_1},x_{i_1}y_{i_1},y_{i_1}w,wy_{i_s},y_{i_s}x_{i_s},x_{i_s}u$.  Let $0\leq j\leq s-1$.  By the maximality of our matching, any element of $X\cup Y$ in the interior of $R_j$ is a neighbour in $G'$ of some vertex in $X\cup Y$ on the boundary of $R_j$.  Since there are $4$ vertices in $X\cup Y$ on the boundary of $R_j$, and $G'$ has maximum degree less than $n^{5/6}$, there are at most $4n^{5/6}$ elements of $X\cup Y$ in the interior of $R_j$.
	
	Let $q\in V(G)\setminus \{u,w\}$.  Then $q$ is in $R_j$ (possibly on the boundary) for some $0\leq j\leq s-1$.  If there exists some $z\in X\cup Y$ for which there is a path of length at most $3$ from $q$ to $z$ avoiding both $u$ and $w$, then $z$ is in $R_{j-4}\cup R_{j-3}\cup \dots R_{j+4}$ (with the subscripts considered modulo $s$).  But there are at most $9\cdot 4n^{5/6}$ such vertices $z$, which finishes the proof of the claim.
	\end{proof}

	Recall that $G'$ is acyclic, so the number of edges in $G'$ is at most $|X|+|Y|-1$.  Thus, if $\ell$ is the number of vertices of degree at least $3$ in $G'$, then $3\ell\leq 2(|X|+|Y|)$, so the number of vertices of degree at most $2$ in $G'$ is $|X|+|Y|-\ell\geq \frac{|X|+|Y|}{3}\geq \frac{2n}{9}-4$.
	
	The number of edges in $G$ is at most $3n$ by Euler's formula, so the number of vertices in $G$ of degree at least $60$ is at most $n/10$.
	
	Moreover, it follows from Claim \ref{Claim2} by double counting that the number of vertices $z\in X\cup Y$ for which there exist at least $1000n^{5/6}$ vertices $q\in V(G)$ with a path of length at most $3$ between $z$ and $q$ and avoiding both $u$ and $w$ is at most $n/10$.
	
	Thus, there exists a vertex $z\in X\cup Y$ which has degree at most $2$ in $G'$, degree at most $60$ in $G$ and for which the number of $q\in V(G)$ with a path of length at most $3$ between $z$ and $q$ avoiding $u$ and $w$ is at most $1000n^{5/6}$.

	Suppose that $q\in V(G)$ is distinct from $z$, $u$ and $w$, and that there exists a type 2 induced $C_5$ containing both $z$ and $q$. Then there exists a path of length at most $3$ from $q$ to $z$ which contains neither $u$ nor $w$. But there are at most $1000n^{5/6}$ such vertices $q\in V(G)$, so by Lemma~\ref{basicbound}, the number of type 2 induced $C_5$'s containing $z$ is at most ${60 \choose 2}\cdot (1000n^{5/6}+2)$.  Moreover, the number of type 1 induced $C_5$'s containing $z$ is at most $2t$, where $t=|N(u)\cap N(w)|$.  Since the total number of induced $C_5$'s containing $z$ is assumed to be at least $2n/3-10$, it follows that $|N(u)\cap N(w)|\geq n/3-n^{6/7}$.
	
	\begin{claim} \label{claim:2/9}
	    The number of induced $C_5$'s in $G$ is at most $(\frac{2}{9}+o(1))n^2$.
	\end{claim}
	
	\begin{proof}
	Let $N(u)\cap N(w)=\{v_1,\dots,v_t\}$ such that $uv_1,uv_2,\dots,uv_t$ are in anticlockwise order and the bounded region with boundary consisting of $uv_1,v_1w,wv_t,v_tu$ contains all the $v_i$'s.  For $1\leq i\leq t-1$, let $T_i$ be the bounded region with boundary consisting of $uv_i,v_iw,wv_{i+1},v_{i+1}u$. Suppose that there are at least $n^{4/5}+4$ values of $i$ for which the interior of $T_i$ contains a vertex of $G$. Then there is some $3\leq i\leq t-3$ such that the number of vertices in the interior of $T_i$ is at least one but at most $n^{1/5}$. Then, by Lemma \ref{emptyorfull}, there is a vertex in $G$ that is contained in at most $11n/20$ induced $C_5$'s, which is a contradiction.  Thus, for all but $o(n)$ choices $6\leq i\leq t-6$ the regions $T_{i-5},T_{i-4},\dots,T_{i+5}$ contain no vertex in their interior.  But for all such $i$, we have that $v_i$ is contained in at most $2n/3+o(n)$ induced $C_5$'s.
	
	Let us remove the vertices $v_i$ for these values of $i$ from $G$ and note that with this we remove at least $n/3-o(n)$ vertices but at most $(\frac{2}{9}+o(1))n^2$ induced $C_5$'s (since we have $n/3-n^{6/7}\leq |N(u)\cap N(w)|\leq n/3+10$).  It suffices to show that in the remaining graph $\tilde{G}$ there are at most $o(n^2)$ induced $C_5$'s.  Let $S=V(\tilde{G})\setminus (X\cup Y\cup \{u,w\})$.  Note that $|S|=o(n)$.
	
	Now we remove the vertices in $S$ one by one in careful order, such that in each step we remove $O(n)$ induced $C_5$'s.  Note that any $v\in S$ is joined to at most $2$ vertices from $X$ (else $G$ contains a subdivision of $K_{3,3}$ which is impossible by Kuratowski’s theorem). Similarly, it is joined to at most $2$ vertices from $Y$, so it is joined to at most $6$ vertices from $X\cup Y\cup \{u,w\}$. Thus, since $\tilde{G}$ is planar, we may remove the vertices of $S$ one by one in a way that in each step the removed vertex has at most $11$ neighbours in the current graph.  This way, by Lemma \ref{basicbound}, we remove at most ${11 \choose 2}\cdot n$ induced $C_5$'s in each step.  Thus, while removing the vertices in $S$, we remove at most $o(n^2)$ induced $C_5$'s.
	
	It remains to prove that in $G\lbrack X\cup Y\cup \{u,w\}\rbrack$ there are $o(n^2)$ induced $C_5$'s.  To show this, we prove that we may remove the vertices in $X\cup Y$ one by one such that in each step we remove $o(n)$ induced $C_5$'s.  Clearly, in each step we can remove a vertex $q\in X\cup Y$ which has degree at most $6$ in the current graph.  We claim that $q$ is then contained in at most $o(n)$ induced $C_5$'s.  Let $Z$ be the set of vertices $z\in X\cup Y$ for which there is a path of length at most $3$ from $q$ to $z$ which avoids both $u$ and $w$.  By Claim \ref{Claim2}, we have $|Z|=o(n)$.  Since $N(u)\cap N(w)\cap (X\cup Y)=\emptyset$, there is no induced $C_5$ with vertices from $X\cup Y\cup \{u,w\}$ which contains both $u$ and $w$, so any induced $C_5$ which contains $q$ must consist of vertices from the set $Z\cup \{u,w\}$.  Thus, as $q$ has degree at most $6$, by Lemma \ref{basicbound} there are at most $o(n)$ induced $C_5$'s containing $q$.     
	\end{proof}
	
	Claim \ref{claim:2/9} contradicts our hypothesis that $G$ has at least $\frac{5}{18}n^2$ induced $C_5$'s, so the proof is complete.
\end{proof}

\begin{lemma} \label{lem:three empty k27s}
    Let $n$ be sufficiently large and let $G$ be a plane graph on $n$ vertices. Suppose that $G$ has at least $\frac{5}{18}n^2$ induced $C_5$'s and that there does not exist a vertex in $G$ which is contained in at most $2n/3-10$ induced $C_5$'s. Then there exist distinct vertices $w_1$, $w_2$, $w_3$ and $u$ such that $w_1w_2,w_2w_3,w_3w_1\in E(G)$ and for every $1\leq i\leq 3$, there exists an empty $K_{2,7}$ whose part of size two is $\{w_i,u\}$.
\end{lemma}

\begin{proof}
By Lemma \ref{nok27}, $G$ contains an empty $K_{2,7}$. Let $u$ and $w$ be the two vertices in the part of size two. Let $X$ and $Y$ be the sets provided by Lemma \ref{structureofexc}. By Property \ref{prop:maxdeg} from that lemma, we may assume without loss of generality (after swapping $u$ and $w$ if necessary) that some $y\in Y$ has at least $n^{5/6}$ neighbours in $X$.  Let $v$ be an arbitrary common neighbour of $u$ and $w$ and order the elements of $Y$ as $y_1,y_2,\dots,y_k$ such that the edges $wv,wy_1,\dots,wy_k$ are in clockwise order.
	
Then $y_iy_j$ is an edge only if $j=i+1$.  Indeed, for any $1\leq \ell\leq k$ there exists a path from $v$ to $y_\ell$ (through $u$ and some $x\in X$) which avoids $\{w\}\cup Y\setminus \{y_\ell\}$.  But if $y_iy_j$ is an edge for some $j>i+1$, then the triangle $wy_iy_j$ separates $y_{i+1}$ from $v$.
	
Now let $y=y_i$ for some $i$.

For large enough $n$, $|N(y)\cap N(u)|\geq n^{5/6}\geq 7n^{4/5}$. Thus, by Corollary \ref{emptyk27}, there are vertices $x_1,\dots,x_7\in N(y)\cap N(u)$ (in natural order) which together with $u$ and $y$ form an empty (not necessarily induced) $K_{2,7}$.

By assumption, $x_4$ is contained in at least $2n/3-10$ induced $C_5$'s.  However, note that any such induced $C_5$ also contains $u$ and $y$.  Let $Z$ be the set of all vertices in $X\cup Y\setminus \{y,x_4\}$ which are contained in an induced $C_5$ containing $x_4$. By Lemma \ref{basicbound}, $|Z|+|V(G)\setminus (X\cup Y)|\geq 2n/3-10$. Since $|X\cup Y|\geq 2n/3-10$, it follows that $|Z|\geq n/4$. If $z\in Z\cap Y$, then $z$ is not a neighbour of $u$, so it must be a neighbour of $y_i$, hence $z=y_{i-1}$ or $z=y_{i+1}$. It follows that $|Z\cap X|\geq n/5$.
	
\medskip
	
\begin{claim}
	Either $y_{i-1}$ is a neighbour of $y_i$ and $|N(y_{i-1})\cap X|\geq n^{5/6}$ or $y_{i+1}$ is a neighbour of $y_i$ and $|N(y_{i+1})\cap X|\geq n^{5/6}$. \label{claim1}
\end{claim}
	
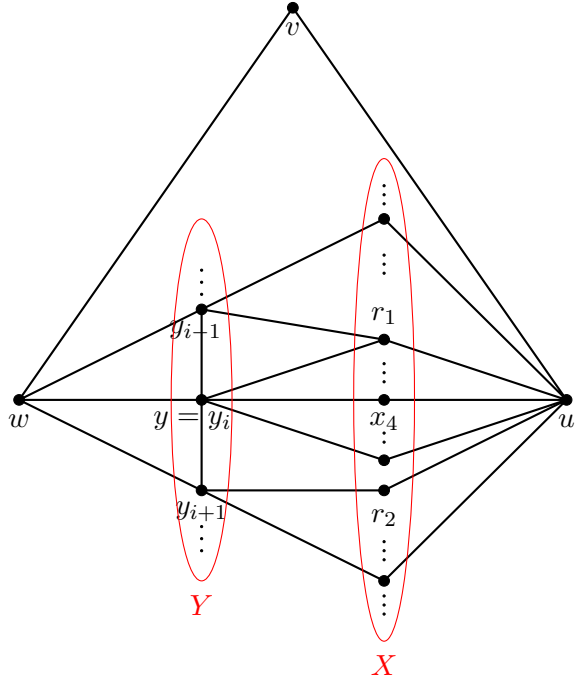
\begin{figure}
\centering
\begin{tikzpicture}[scale=0.4]
\draw[fill=black](0,0)circle(5pt);
\draw[fill=black](6,3)circle(5pt);
\draw[fill=black](6,0)circle(5pt);
\draw[fill=black](6,-3)circle(5pt);
\draw[fill=black](18,0)circle(5pt);
\draw[fill=black](12,4.2)circle(1pt);
\draw[fill=black](12,4.8)circle(1pt);
\draw[fill=black](12,4.5)circle(1pt);
\draw[fill=black](12,6)circle(5pt);
\draw[fill=black](12,0)circle(5pt);
\draw[fill=black](12,2)circle(5pt);
\draw[fill=black](12,-5.3)circle(1pt);
\draw[fill=black](12,-5.0)circle(1pt);
\draw[fill=black](12,-4.7)circle(1pt);
\draw[fill=black](12,-6)circle(5pt);
\draw[fill=black](12,-3)circle(5pt);
\draw[fill=black](12,-2)circle(5pt);
\draw[fill=black](9,13)circle(5pt);
\draw[fill=black](12,0.6)circle(1pt);
\draw[fill=black](12,0.9)circle(1pt);
\draw[fill=black](12,1.2)circle(1pt);
\draw[fill=black](12,-1.1)circle(1pt);
\draw[fill=black](12,-1.4)circle(1pt);
\draw[fill=black](6,3.5)circle(1pt);
\draw[fill=black](6,3.9)circle(1pt);
\draw[fill=black](6,4.3)circle(1pt);
\draw[fill=black](6,-5)circle(1pt);
\draw[fill=black](6,-4.2)circle(1pt);
\draw[fill=black](6,-4.6)circle(1pt);
\draw[fill=black](12,7.1)circle(1pt);
\draw[fill=black](12,6.8)circle(1pt);
\draw[fill=black](12,6.5)circle(1pt);
\draw[fill=black](12,-7.1)circle(1pt);
\draw[fill=black](12,-6.5)circle(1pt);
\draw[fill=black](12,-6.8)circle(1pt);
\draw[thick](0,0)--(6,3);
\draw[thick](0,0)--(6,0);
\draw[thick](0,0)--(6,-3);
\draw[thick](0,0)--(9,13)--(18,0)(6,3)--(12,6)--(18,0);
\draw[thick](6,-3)--(12,-6)--(18,0)(6,-3)--(12,-3)--(18,0);
\draw[thick](6,0)--(12,-2)--(18,0)(6,0)--(12,0)--(18,0)(6,3)--(12,2)--(18,0);
\draw[thick](6,0)--(12,2);
\draw[thick](6,-3)--(6,0);
\draw[thick](6,3)--(6,0);
\node at (0,-0.7) {$w$};
\node at (9,12.3) {$v$};
\node at (18,-0.7) {$u$};
\node at (5.8,2.3) {$y_{i-1}$};
\node at (5.7,-0.7)  {$y=y_i$};
\node at (6,-3.7) {$y_{i+1}$};
\node at (12,2.8)  {$r_1$};
\node at (12,-3.9)  {$r_2$};
\node at (12,-0.7)  {$x_4$};

\draw[rotate around={90:(6,0)},red] (6,0) ellipse (6 and 1);
\draw[rotate around={90:(12,0)},red] (12,0) ellipse (8 and 1);
\node[red] at (6,-6.8)  {$Y$};
\node[red] at (12,-8.8)  {$X$};
\end{tikzpicture}
\label{fig:claim1}
\caption{Proof of Claim \ref{claim1}}
\end{figure}
	
\begin{proof}
	We prove this claim by using $|Z\cap X|\geq n/5$.
	
	If $k=1$, then $X\s N(y)\cap N(u)$, so $Z=\emptyset$, which is a contradiction.
	
	Suppose that $k\geq 2$.  Let $z\in Z\cap X$. Observe that since $y,x_4,u$ and $z$ are contained in an \emph{induced} $C_5$, we have $z\not \in N(y)$, and the fifth vertex in the $C_5$ is some $q\in N(y)\cap N(z)$.
	
	Let us first assume that $2\leq i\leq k-1$.  Let $r_1$ be the element in $N(y_{i-1})\cap X$ which is the first neighbour of $u$ in clockwise order compared to $x_4$. Similarly, let $r_2$ be the element in $N(y_{i+1})\cap X$ which is the first neighbour of $u$ in anticlockwise order compared to $x_4$.  Note that the edges $wy_{i-1}$, $y_{i-1}r_1$, $r_1u$, $ur_2$, $r_2y_{i+1}$, $y_{i+1}w$ divide the plane into two regions; let $R$ be the one which contains $y$.  Then either $z$ is also in $R$ (possibly on the boundary), or $q$ is on the boundary of $R$. In the former case, there are only two possibilities for $z$: $r_1$ and $r_2$ (since $z\not \in N(y_i)$). Assume that $q$ is on the boundary of $R$. Since $ux_4yqz$ is an induced $C_5$, we have $q\not \in N(u)$.  Thus, $q\not \in X$ so $q\neq r_1$ and $q\neq r_2$.  Also, $z\in X$, so $z\not \in N(w)$, hence $q\neq w$.  Moreover, $q$ is distinct from $u$.  Thus, $q\in \{y_{i-1},y_{i+1}\}$, $q$ is a neighbour of $y_i$ and $z$ is a neighbour of $q$, from which the claim follows.
	
	Assume now that $i=1$. Let $r$ be the element in $N(y_2)\cap X$ which is the first neighbour of $u$ in anticlockwise order compared to $x_4$. The edges $wv,vu,ur,ry_2$ and $y_2w$ divide the plane into two regions; let $R$ be the one containing $y_1$.  Then either $z$ is also in $R$ (possibly on the boundary), or $q$ is on the boundary of $R$. In the former case, $z$ must be $r$ because $z\not \in N(y_1)$. Assume that $q$ is on the boundary of $R$. Note that $q\not \in N(u)$, so $q\neq r$ and $q\neq v$.  Also, $z\in X$, so $z\not \in N(w)$, hence $q\neq w$.  Moreover, $q$ is distinct from $u$.  Thus, $q=y_2$ and $z\in N(y_2)$.
	
	The case $i=k$ is very similar, so the claim is proved.  
	\end{proof}
	
	Assume that $y_{i-1}$ is a neighbour of $y_i$ and $|N(y_{i-1})\cap X|\geq n^{5/6}$. In particular, $|N(y_{i-1})\cap N(u)|\geq n^{5/6}$, so by Corollary \ref{emptyk27} there exists an empty $K_{2,7}$ whose part of size two consists of $y_{i-1}$ and $u$. This means that we can choose $w_1=w$, $w_2=y_{i-1}$ and $w_3=y_i$ and these vertices have the desired properties. The case where $y_{i+1}$ is a neighbour of $y_i$ with $|N(y_{i+1})\cap X|\geq n^{5/6}$ is almost identical.
\end{proof}

\section{The fine structure of extremal graphs} \label{sec:three vx removal}

In this section, we prove that if there do not exist three vertices whose removal deletes fewer than $h(n)-h(n-3)$ induced $5$-cycles, then the graph has a very specific structure (very close to the extremal graph $H_n$).

\begin{lemma} \label{lem:compare n and n-3}
    For any $n\geq 10$, $h(n)-h(n-3)=2n-11$.
\end{lemma}

\begin{proof}
Choose $k_1,k_2,k_3\in \mathbb{N}$ such that $k_1+k_2+k_3=n-4$ and $k_1,k_2,k_3$ are as close as possible. Then $h(n)=k_1k_2+k_2k_3+k_3k_1+2$ and $h(n-3)=(k_1-1)(k_2-1)+(k_2-1)(k_3-1)+(k_3-1)(k_1-1)+2$, so
$h(n)-h(n-3)=2k_1+2k_2+2k_3-3=2(n-4)-3=2n-11$.
\end{proof}

\begin{lemma} \label{lem:three vx removal}
    Let $G$ be a planar graph on $n$ vertices. Let $w_1,w_2,w_3$ be vertices in $G$ forming a triangle. Let $u$ be a vertex, not in $N(w_1)\cup N(w_2)\cup N(w_3)$, such that for every $i\in \{1,2,3\}$, there exists some $v_i\in N(u)\cap N(w_i)$ which is not a neighbour of $v_j$ and $w_j$ for $j\neq i$. Then the number of induced $C_5$'s containing $uv_iw_i$ for some $1\leq i\leq 3$ is at most $2n-11$. Moreover, if equality holds, then every vertex $x\in V(G)\setminus \{u,w_1,w_2,w_3\}$ satisfies one of the following.
    \begin{enumerate}[label=(\roman*)]
        \item There exists some $i$ such that $x\in N(w_i)\cap N(u)$, or
        \item there exist some $i\neq j$ such that $x\in N(w_i)\cap N(w_j)\setminus N(u)$, $N(x)\cap N(w_i)\cap N(u)\neq \emptyset$ and $N(x)\cap N(w_j)\cap N(u)\neq \emptyset$, or \label{prop:upgraded}
        \item there exist some $i\neq j$ and $y\in N(w_i)\cap N(w_j)\setminus N(u)$ such that $x\in N(u)\cap N(y)\setminus (N(w_1)\cup N(w_2)\cup N(w_3))$.
    \end{enumerate}
\end{lemma}

The following lemma will be used in the proof of Lemma \ref{lem:three vx removal}.

\begin{lemma} \label{lem:sum X and Y}
    Let $G$ be a planar graph on $n$ vertices. Let $w_1,w_2,w_3$ be vertices in $G$ forming a triangle. Let $u$ be a vertex, not in $N(w_1)\cup N(w_2)\cup N(w_3)$, such that for every $i\in \{1,2,3\}$, there exists some $v_i\in N(u)\cap N(w_i)$ which is not a neighbour of $w_j$ for $j\neq i$. For every $i\in \{1,2,3\}$, let $X_i$ and $Y_i$ be defined like $X$ and $Y$ in Lemma \ref{basicbound} with $w_i$ and $v_i$ taking the role of $w$ and $v$. Let $G_i$ be the induced bipartite subgraph of $G$ with parts $X_i$ and $Y_i$. For $i\in \{1,2,3\}$, let $\lambda_i=1$ if $w_{i+1}$ and $w_{i+2}$ are in the same connected component in $G_i$ (where temporarily we write $w_4:=w_1$ and $w_5:=w_2$) and let $\lambda_i=0$ otherwise. Then
    $$\sum_{i\leq 3} (|X_i|+|Y_i|)\leq 2(n-1)-\lambda_1-\lambda_2-\lambda_3.$$
    Moreover, if equality holds, then every vertex $x\in V(G)\setminus \{u,w_1,w_2,w_3\}$ satisfies one of the following.
    \begin{enumerate}[label=(\roman*)]
        \item There exists some $i$ such that $x\in N(w_i)\cap N(u)$, or
        \item there exist some $i\neq j$ such that $x\in N(w_i)\cap N(w_j)\setminus N(u)$ and $N(x)\cap N(u)\neq \emptyset$, or \label{prop:in Y}
        \item there exist some $i\neq j$ and $y\in N(w_i)\cap N(w_j)\setminus N(u)$ such that $x\in N(u)\cap N(y)\setminus (N(w_1)\cup N(w_2)\cup N(w_3))$. \label{prop:in X}
    \end{enumerate}
\end{lemma}

\begin{proof}
    First, note that for every $i\in \{1,2,3\}$, $u$ does not belong to either of $X_i$ and $Y_i$. We now want to show that every $x\in V(G)$ belongs to at most two of the sets $X_1,X_2,X_3,Y_1,Y_2,Y_3$. For this, first observe that no $x\in V(G)$ belongs to both $X_i$ and $Y_j$ for some $i,j\in \{1,2,3\}$. This is because elements in $X_i$ are neighbours of $u$, while elements in $Y_j$ are non-neighbours. Hence, it remains to prove that no $x\in V(G)$ belongs to $X_1\cap X_2\cap X_3$ or to $Y_1\cap Y_2\cap Y_3$. If $x\in Y_1\cap Y_2\cap Y_3$, then $x\in N(w_1)\cap N(w_2)\cap N(w_3)$, but then $x$ cannot have a common neighbour with $u$ by planarity, which is a contradiction. If $x\in X_1\cap X_2\cap X_3$, then $x\in N(u)\setminus (N(w_1)\cup N(w_2)\cup N(w_3))$ and $x$ has a common neighbour with each $w_i$, but this is again impossible by planarity.
    
    It already follows that $\sum_{i\leq 3} (|X_i|+|Y_i|)\leq 2(n-1)$.
    
    Assume now that for some $i\in \{1,2,3\}$, the vertices $w_{i+1}$ and $w_{i+2}$ belong to the same connected component in $G_i$. It is not hard to see that this implies that $N(w_{i+1})\cap N(w_{i+2})\cap N(u)\neq \emptyset$. Then this common neighbour does not belong to any $Y_j$ (since it is a neighbour of $u$), but it also does not belong to $X_{i+1}$ and $X_{i+2}$ (where, again, indices are understood modulo 3). So for every such index $i$, we ``gain 1" compared to $\sum_{i\leq 3} (|X_i|+|Y_i|)\leq 2(n-1)$. Since $N(w_1)\cap N(w_2)\cap N(w_3)\cap N(u)=\emptyset$, it follows that
    \begin{equation} \label{eqn:sum X and Y}
        \sum_{i\leq 3} (|X_i|+|Y_i|)\leq 2(n-1)-\lambda_1-\lambda_2-\lambda_3.
    \end{equation}
    
    Assume now that equality holds here. Then every $x\in V(G)\setminus \{u\}$ which is not in $N(u)\cap (N(w_1)\cup N(w_2)\cup N(w_3))$ must belong to two of the sets $X_1,X_2,X_3,Y_1,Y_2,Y_3$. Assume first that $x\in Y_i\cap Y_j$ for some $i\neq j$. Then $x\in N(w_i)\cap N(w_j)$ and $N(x)\cap N(u)\neq \emptyset$, so $x$ satisfies \ref{prop:in Y}. Suppose now that $x\in X_i\cap X_j$ for some $i\neq j$. Then $x\in N(u)\setminus (N(w_i)\cup N(w_j))$ and $x$ has neighbours $y_i\in Y_i$ and $y_j\in Y_j$. It follows that $x$ does not have a common neighbour with $w_k$, where $w_k$ is the member of $\{w_1,w_2,w_3\}$ different from $w_i$ and $w_j$ since otherwise $G$ contains a subdivision of $K_5$ (the vertices of the $K_5$ are $u,x,w_1,w_2,w_3$), contradicting planarity. But since we have equality in (\ref{eqn:sum X and Y}), $y_i$ must belong to two of $Y_i$, $Y_j$ and $Y_k$. Hence, we necessarily have $y_i\in Y_i\cap Y_j$. It follows that $y_i\in N(w_i)\cap N(w_j)$. So we may take $y=y_i$ and then $x$ satisfies property \ref{prop:in X}.
\end{proof}

\begin{proof}[Proof of Lemma \ref{lem:three vx removal}]
    For every $i\in \{1,2,3\}$, define $X_i$, $Y_i$, $G_i$ and $\lambda_i$ as in Lemma \ref{lem:sum X and Y} and let $\mu_i$ be the number of connected components of $G_i$. By Lemma \ref{basicbound}, the number of induced $C_5$'s containing $u$, $v_i$ and $w_i$ is at most $|X_i|+|Y_i|-\mu_i$. Since for every $i\neq j$, there exists an induced $C_5$ with vertices $u,v_i,w_i,v_j,w_j$, it follows that the number of induced $C_5$'s containing $uv_iw_i$ for some $1\leq i\leq 3$ is at most
    $$\left(\sum_{i\leq 3} (|X_i|+|Y_i|-\mu_i)\right)-3.$$
    By Lemma \ref{lem:sum X and Y}, this is at most $2n-5-(\mu_1+\mu_2+\mu_3)-(\lambda_1+\lambda_2+\lambda_3)$. Note that for every $i\in \{1,2,3\}$, $\mu_i+\lambda_i\geq 2$. Hence, $\mu_1+\mu_2+\mu_3+\lambda_1+\lambda_2+\lambda_3\geq 6$ and it follows that the number of induced $C_5$'s containing $uv_iw_i$ for some $i$ is at most $2n-11$.
    
    Assume now that this number is precisely $2n-11$. Then we must have
    $$\sum_{i\leq 3}(|X_i|+|Y_i|)=2(n-1)-\lambda_1-\lambda_2-\lambda_3$$ and
    $$\mu_1+\mu_2+\mu_3+\lambda_1+\lambda_2+\lambda_3=6.$$
    The first equation implies, using Lemma \ref{lem:sum X and Y}, that every $x\in V(G)\setminus \{u,w_1,w_2,w_3\}$ satisfies one of the following.
    \begin{enumerate}[label=(\alph*)]
        \item There exists some $i$ such that $x\in N(w_i)\cap N(u)$, or
        \item there exist some $i\neq j$ such that $x\in N(w_i)\cap N(w_j)\setminus N(u)$ and $N(x)\cap N(u)\neq \emptyset$, or \label{prop:to upgrade}
        \item there exist some $i\neq j$ and $y\in N(w_i)\cap N(w_j)\setminus N(u)$ such that $x\in N(u)\cap N(y)\setminus (N(w_1)\cup N(w_2)\cup N(w_3))$. 
    \end{enumerate}
    This is almost what we need; it just remains to prove that if some $x\in V(G)\setminus \{w_1,w_2,w_3\}$ satisfies property \ref{prop:to upgrade}, then it also satisfies property \ref{prop:upgraded} in the statement of this lemma. Assume that it does not; then WLOG $N(x)\cap N(w_i)\cap N(u)=\emptyset$. Note that from the proof of Lemma~\ref{lem:sum X and Y} it is clear that we must have $x\in Y_j$ to attain equality in (\ref{eqn:sum X and Y}). It is not hard to see that $N(x)\cap N(w_i)\cap N(u)=\emptyset$ implies that $x$ and $w_i$ are in different connected components of $G_j$. Since $x$ and $w_k$ (where $w_k$ is the element of $\{w_1,w_2,w_3\}$ different from $w_i$ and $w_j$) are also in different connected components in $G_j$, it follows that $\mu_j+\lambda_j\geq 3$ and hence
    $$\mu_1+\mu_2+\mu_3+\lambda_1+\lambda_2+\lambda_3\geq 7,$$ which is a contradiction.
\end{proof}

\begin{lemma} \label{lem:two scenarios}
    Let $G$ be a plane graph on $n$ vertices. Let $w_1,w_2,w_3$ be vertices in $G$ forming a triangle. Let $u$ be a vertex, not in $N(w_1)\cup N(w_2)\cup N(w_3)$, such that for every $i\in \{1,2,3\}$, there exists an empty $K_{2,7}$ whose part of size two is $\{u,w_i\}$. Assume that there do not exist three vertices such that the number of induced $C_5$'s containing at least one of these three vertices is at most $2n-12$. Then one of the following two scenarios must occur.
    \begin{enumerate}[label=(\alph*)]
        \item Every $x\in V(G)\setminus \{u,w_1,w_2,w_3\}$ is a common neighbour of $u$ and $w_i$ for some $i\in \{1,2,3\}$, or \label{prop:no X}
        \item after relabelling $w_1$, $w_2$ and $w_3$ if necessary, there exists $w_4\in N(w_2)\cap N(w_3)\setminus (N(u)\cup \{w_1\})$ such that $N(w_2)\cap N(w_4)\cap N(u)\neq \emptyset$, $N(w_4)\cap N(w_3)\cap N(u)\neq \emptyset$, and every $x\in V(G)\setminus \{u,w_1,w_2,w_3,w_4\}$ belongs to $N(w_i)\cap N(u)$ for some $i\in \{1,2,3,4\}$. \label{prop:there is X}
    \end{enumerate}
\end{lemma}

\begin{proof}
    For each $i\in \{1,2,3\}$, take an empty $K_{2,7}$ and call its centre vertex $v_i$. It is easy to see that any induced $C_5$ in $G$ which contains $v_i$ must also contain $u$ and $w_i$. Hence, by Lemma \ref{lem:three vx removal}, the number of induced $C_5$'s in $G$ containing at least one of $v_1$, $v_2$, $v_3$ is at most $2n-11$. By the assumption in our lemma, it must therefore be precisely $2n-11$. Hence, by Lemma \ref{lem:three vx removal} again, every vertex $x\in V(G)\setminus \{u,w_1,w_2,w_3\}$ satisfies one of the following.
    \begin{enumerate}[label=(\roman*)]
        \item There exists some $i\in \{1,2,3\}$ such that $x\in N(w_i)\cap N(u)$, or \label{prop:standard}
        \item there exist distinct $i,j\in \{1,2,3\}$ such that $x\in N(w_i)\cap N(w_j)\setminus N(u)$, $N(x)\cap N(w_i)\cap N(u)\neq \emptyset$ and $N(x)\cap N(w_j)\cap N(u)\neq \emptyset$, or \label{prop:in Y new}
        \item there exist distinct $i,j\in \{1,2,3\}$ and $y\in N(w_i)\cap N(w_j)\setminus N(u)$ such that $x\in N(u)\cap N(y)\setminus (N(w_1)\cup N(w_2)\cup N(w_3))$. \label{prop:in X new}
    \end{enumerate}
    Suppose first that there is no vertex $x\in V(G)\setminus \{u,w_1,w_2,w_3\}$ which satisfies \ref{prop:in X new}. If, in addition, there is no $x\in V(G)\setminus \{u,w_1,w_2,w_3\}$ satisfying \ref{prop:in Y new} either, then scenario \ref{prop:no X} holds and we are done. Moreover, if there is only one vertex $x\in V(G)\setminus \{u,w_1,w_2,w_3\}$ satisfying \ref{prop:in Y new}, then we can choose that vertex to be $w_4$ and scenario \ref{prop:there is X} is satisfied (after relabelling $w_1$, $w_2$ and $w_3$ if necessary). Assume that there are at least two vertices $x\in V(G)\setminus \{u,w_1,w_2,w_3\}$ satisfying \ref{prop:in Y new}. Call them $y$ and $y'$, and assume, without loss of generality, that $y\in N(w_1)\cap N(w_2)$ and $y'\in N(w_2)\cap N(w_3)$.
    
    If $N(w_1)\cap N(w_2)\cap N(u)\neq \emptyset$, then the only neighbours of $y$ are vertices of a triangle, and hence $y$ is not contained in any induced $5$-cycle. In this case, the number of induced $5$-cycles containing one of $v_1,v_2,y$ is at most (in fact, substantially less than) $2n-12$, a contradiction. Similarly, we cannot have $N(w_2)\cap N(w_3)\cap N(u)\neq \emptyset$.
    
    \begin{claim}
        The number of induced $C_5$'s containing at least one of $y$, $v_1$ and $v_2$ is at most $2n-12$.
    \end{claim}
    
    \begin{proof}
    Let $k_1=|N(w_1)\cap N(u)|$ and let $k_2=|N(w_2)\cap N(u)|$. It is not hard to see that the induced $C_5$'s containing $y$ are $ux_1w_1yz$ for some $x_1\in N(w_1)\cap N(u)\setminus N(y)$ and for the unique $z\in N(y)\cap N(w_2)\cap N(u)$ and $ux_2w_2yz$ for some $x_2\in N(w_2)\cap N(u)\setminus N(y)$ and for the unique $z\in N(y)\cap N(w_1)\cap N(u)$. There are $k_1+k_2-2$ such induced $C_5$'s. Every induced $C_5$ containing $v_2$ also contains $u$ and $w_2$, and by Lemma \ref{basicbound}, the number of induced $C_5$'s containing $uv_2w_2$ is at most $n-(k_2+2)-1$ (since $X\cup Y\subset V(G)\setminus \left((N(u)\cap N(w_2))\cup \{u,w_2\}\right)$ when we take $v=v_2,w=w_2$ in the lemma). Furthermore, by the same lemma, the number of induced $C_5$'s containing $uv_1w_1$ is at most $n-(k_1+2)-2$. This is because we have seen (before the claim) that $N(w_2)\cap N(w_3)\cap N(u)=\emptyset$, and hence $\mu\geq 2$ when we apply Lemma \ref{basicbound} with $v=v_1,w=w_1$.
    
    Combining our estimates and noting that for any two of $y$, $v_1$ and $v_2$, there exists an induced $C_5$ containing those two vertices but not the third, it follows that the number of induced $C_5$'s containing at least one of $y$, $v_1$ and $v_2$ is at most $(k_1+k_2-2)+(n-k_2-3)+(n-k_1-4)-3=2n-12$, as claimed.
    \end{proof}
    
    The claim contradicts the conditions set out in the lemma, so we may assume that there does exist a vertex $x\in V(G)\setminus \{u,w_1,w_2,w_3\}$ which satisfies \ref{prop:in X new}.
    Hence, after reordering $w_1$, $w_2$ and $w_3$ if necessary, we may assume that there exist $v_4,w_4\in V(G)$ such that $w_4\in N(w_2)\cap N(w_3)\setminus N(u)$ and $v_4\in N(u)\cap N(w_4)\setminus (N(w_1)\cup N(w_2)\cup N(w_3))$. Since $v_4\in N(w_4)\setminus N(w_1)$, we have $w_4\neq w_1$. Also, $w_4\not \in N(u)$, so it cannot satisfy properties \ref{prop:standard} and \ref{prop:in X new}, hence it must satisfy \ref{prop:in Y new}. It is not hard to see that therefore $N(w_4)\cap N(w_2)\cap N(u)\neq \emptyset$ and $N(w_4)\cap N(w_3)\cap N(u)\neq \emptyset$. Write $r$ for the unique vertex in $N(w_4)\cap N(w_2)\cap N(u)$ and write $r'$ for the unique vertex in $N(w_4)\cap N(w_3)\cap N(u)$. Let $R$ be the region bounded by edges $ur,rw_4,w_4r',r'u$ and containing $v_4$. By the classification of the vertices in $V(G)\setminus \{u,w_1,w_2,w_3\}$, any vertex in the interior of $R$ must be a common neighbour of $w_4$ and $u$. It follows that the only possible induced $C_5$ containing $v_4$ which does not contain $u$ and $w_4$ is $v_4rw_2w_3r'$ (these vertices may or may not induce a $C_5$).
    
    We will now use Lemma \ref{lem:three vx removal} to upper bound the number of induced $C_5$'s containing $uv_iw_i$ for some $2\leq i\leq 4$. The lemma can be applied with $w_4$ and $v_4$ in place of $w_1$ and $v_1$ and it follows that there are at most $2n-11$ induced $C_5$'s containing $uv_iw_i$ for some $2\leq i\leq 4$. Assume, for contradiction, that scenario \ref{prop:there is X} in the statement of Lemma \ref{lem:two scenarios} does not hold. In this case, we can improve the $2n-11$ bound.
    
    \begin{claim} \label{claim:sum of three}
        The number of induced $C_5$'s containing $uv_iw_i$ for some $2\leq i\leq 4$ is at most $2n-13$.
    \end{claim}
    
    \begin{proof}
    Since scenario \ref{prop:there is X} does not hold, there exists some $x\in V(G)\setminus \{u,w_1,w_2,w_3,w_4\}$ which does not belong to $N(u)\cap N(w_i)$ for any $i\in \{1,2,3,4\}$. By the classification of the vertices in $V(G)\setminus \{u,w_1,w_2,w_3\}$, this implies that either there exists a vertex $y\in N(w_1)\cap N(w_2)\setminus (N(u)\cup \{w_3\})$ such that $N(y)\cap N(w_1)\cap N(u)\neq \emptyset$ and $N(y)\cap N(w_2)\cap N(u)\neq \emptyset$, or there exists a vertex $y\in N(w_1)\cap N(w_3)\setminus (N(u)\cup \{w_2\})$ such that $N(y)\cap N(w_1)\cap N(u)\neq \emptyset$ and $N(y)\cap N(w_3)\cap N(u)\neq \emptyset$. WLOG, assume that the former holds. As before, $N(w_1)\cap N(w_2)\cap N(u)=\emptyset$. Let $S=N(y)\cap N(u)\setminus (N(w_1)\cup N(w_2))$. Note that there does not exist an induced $C_5$ containing all of $u$, $v_4$, $w_4$ and at least one element in $S\cup \{y\}$ and that there does not exist an induced $C_5$ containing all of $u$, $v_3$, $w_3$ and at least one element in $S\cup \{y\}$. Moreover, it is not hard to see that the number of induced $C_5$'s containing $u$, $v_2$, $w_2$ and at least one element of $S\cup \{y\}$ is at most $|S|+1$. By Lemma \ref{lem:three vx removal} applied to the planar graph $G-(S\cup \{y\})$, the number of induced $C_5$'s containing $uv_iw_i$ for some $2\leq i\leq 4$ but none of $S\cup \{y\}$ is at most $2(n-|S|-1)-11$. Combining this with our previous estimate, we conclude that the number of induced $C_5$'s containing $uv_iw_i$ for some $2\leq i\leq 4$ is at most $2n-|S|-12$. This is at most $2n-12$, so we are done unless it is exactly $2n-12$. In this case, the number of induced $C_5$'s in $G-(S\cup \{y\})$ containing $uv_iw_i$ for some $2\leq i\leq 4$ is exactly $2(n-|S|-1)-11$. By the equality case in Lemma \ref{lem:three vx removal} (applied to the graph $G-(S\cup \{y\})$ and with $v_4,w_4$ replacing $v_1,w_1$) and since $w_1\not \in N(u)$, it follows that $N(w_1)\cap N(w_2)\cap N(u)\neq \emptyset$, which is a contradiction.
    \end{proof}
    The claim implies that the number of induced $C_5$'s containing $v_2$, $v_3$ or $v_4$ is at most $2n-12$, a contradiction.
\end{proof}

\section{Completing the proof of Proposition \ref{prop:combine all}} \label{sec:finishing}

\begin{lemma} \label{lem:scenarios okay}
    In the setting of Lemma \ref{lem:two scenarios}, $G$ contains at most $h(n)$ induced $C_5$'s.
\end{lemma}

\begin{proof}
We treat scenarios \ref{prop:no X} and \ref{prop:there is X} separately.

Let us assume first that scenario \ref{prop:no X} holds. Let $s_1=N(w_1)\cap N(u)\setminus (N(w_2)\cup N(w_3))$ and define $s_2,s_3$ analogously. For $(i,j)\in \{(1,2),(2,3),(3,1)\}$, let $\gamma_{i,j}=|N(w_i)\cap N(w_j)\cap N(u)|$. Note that for every $i,j$, $\gamma_{i,j}\in \{0,1\}$ and that $s_1+s_2+s_3+\gamma_{1,2}+\gamma_{2,3}+\gamma_{3,1}=n-4$. The induced $C_5$'s in $G$ are those of the form $ux_iw_iw_jx_j$ where $i,j$ are distinct elements of $\{1,2,3\}$, $x_i\in N(w_i)\cap N(u)\setminus N(w_j)$ and $x_j\in N(w_j)\cap N(u)\setminus N(w_i)$. For $(i,j)=(1,2)$, the number of such induced $C_5$'s is $(s_1+\gamma_{3,1})(s_2+\gamma_{2,3})$. Combining this with the analogous formulae for the cases $(i,j)=(2,3)$ and $(i,j)=(3,1)$, we conclude that the number of induced $C_5$'s in $G$ is
\begin{equation}
    (s_1+\gamma_{3,1})(s_2+\gamma_{2,3})+(s_2+\gamma_{1,2})(s_3+\gamma_{3,1})+(s_3+\gamma_{2,3})(s_1+\gamma_{1,2}). \label{eqn:scenario a count}
\end{equation}
Setting $\delta_1=\gamma_{3,1}+\gamma_{1,2}-\gamma_{2,3}$, $\delta_2=\gamma_{1,2}+\gamma_{2,3}-\gamma_{3,1}$ and $\delta_3=\gamma_{2,3}+\gamma_{3,1}-\gamma_{1,2}$, we get that (\ref{eqn:scenario a count}) is equal to
$$(s_1+\delta_1)(s_2+\delta_2)+(s_2+\delta_2)(s_3+\delta_3)+(s_3+\delta_3)(s_1+\delta_1)+\gamma_{3,1}\gamma_{2,3}+\gamma_{1,2}\gamma_{3,1}+\gamma_{2,3}\gamma_{1,2}-\delta_1\delta_2-\delta_2\delta_3-\delta_3\delta_1.$$
It is straightforward to check that $\gamma_{3,1}\gamma_{2,3}+\gamma_{1,2}\gamma_{3,1}+\gamma_{2,3}\gamma_{1,2}-\delta_1\delta_2-\delta_2\delta_3-\delta_3\delta_1\leq 1$. Hence, the number of induced $C_5$'s in $G$ is at most $(s_1+\delta_1)(s_2+\delta_2)+(s_2+\delta_2)(s_3+\delta_3)+(s_3+\delta_3)(s_1+\delta_1)+1$. Since $s_1+\delta_1+s_2+\delta_2+s_3+\delta_3=s_1+s_2+s_3+\gamma_{1,2}+\gamma_{2,3}+\gamma_{3,1}=n-4$, it follows that the number of induced $C_5$'s in $G$ is at most
$$\max\{k_1k_2+k_2k_3+k_3k_1: k_1,k_2,k_3\in \mathbb{N}, k_1+k_2+k_3=n-4\}+1<h(n).$$

Let us assume now that scenario \ref{prop:there is X} holds. Let $s_1=|N(w_1)\cap N(u)\setminus (N(w_2)\cup N(w_3)\cup N(w_4))|$, and define $s_2,s_3,s_4$ analogously. Moreover, let $\gamma_{3,1}=|N(w_3)\cap N(w_1)\cap N(u)|$ and let $\gamma_{1,2}=|N(w_1)\cap N(w_2)\cap N(u)|$. Note that $s_1+s_2+s_3+s_4+\gamma_{3,1}+\gamma_{1,2}=n-7$. One can check that every induced $C_5$ in $G$ contains $u$ unless $s_4=1$ in which case there is a unique induced $C_5$ not containing $u$, which consists of $w_2$, $w_3$, the unique vertex in $N(w_3)\cap N(w_4)\cap N(u)$, the unique vertex in $N(w_4)\cap N(u)\setminus (N(w_2)\cup N(w_3))$ and the unique vertex in $N(w_2)\cap N(w_4)\cap N(u)$. Every induced $C_5$ containing $u$ contains precisely two of the $w_i$'s. The number of those containing $w_1$ and $w_2$ is $(s_1+\gamma_{3,1})(s_2+1)$; the number of those containing $w_2$ and $w_3$ is $(s_2+1+\gamma_{1,2})(s_3+1+\gamma_{3,1})$; the number of those containing $w_3$ and $w_1$ is $(s_3+1)(s_1+\gamma_{1,2})$; the number of those containing $w_2$ and $w_4$ is $(s_2+\gamma_{1,2})(s_4+1)$; and the number of those containing $w_4$ and $w_3$ is $(s_4+1)(s_3+\gamma_{3,1})$ (there is none which contains $w_1$ and $w_4$ since those vertices are nonadjacent). Altogether, the number of induced $C_5$'s in $G$ is at most
\begin{align}
&(s_1+\gamma_{3,1})(s_2+1)+(s_2+1+\gamma_{1,2})(s_3+1+\gamma_{3,1})+(s_3+1)(s_1+\gamma_{1,2}) \nonumber \\
&+(s_2+\gamma_{1,2})(s_4+1)+(s_4+1)(s_3+\gamma_{3,1})+1. \label{eqn:scenario b count}
\end{align}
The coefficient of $s_1$ is $(s_2+1)+(s_3+1)$, while the coefficient of $s_4$ is $(s_2+\gamma_{1,2})+(s_3+\gamma_{3,1})$. Hence, (\ref{eqn:scenario b count}) is at most
$$(s_1+s_4+\gamma_{3,1})(s_2+1)+(s_2+1+\gamma_{1,2})(s_3+1+\gamma_{3,1})+(s_3+1)(s_1+s_4+\gamma_{1,2})+(s_2+\gamma_{1,2})+(s_3+\gamma_{3,1})+1.$$
Setting $\delta_1=\gamma_{1,2}+\gamma_{3,1}+1$, $\delta_2=\gamma_{1,2}-\gamma_{3,1}+1$ and $\delta_3=\gamma_{3,1}-\gamma_{1,2}+1$, the above sum is equal to
\begin{align*}
    &(s_1+s_4+\delta_1)(s_2+\delta_2)+(s_2+\delta_2)(s_3+\delta_3)+(s_3+\delta_3)(s_1+s_4+\delta_1)+2 \\
    &+3\gamma_{3,1}+3\gamma_{1,2}+\gamma_{3,1}\gamma_{1,2}-\delta_1\delta_2-\delta_2\delta_3-\delta_3\delta_1.
\end{align*}
It is straightforward to check that $3\gamma_{3,1}+3\gamma_{1,2}+\gamma_{3,1}\gamma_{1,2}-\delta_1\delta_2-\delta_2\delta_3-\delta_3\delta_1\leq 0$, hence the number of induced $C_5$'s in $G$ is at most
$$(s_1+s_4+\delta_1)(s_2+\delta_2)+(s_2+\delta_2)(s_3+\delta_3)+(s_3+\delta_3)(s_1+s_4+\delta_1)+2.$$
But $(s_1+s_4+\delta_1)+(s_2+\delta_2)+(s_3+\delta_3)=s_1+s_2+s_3+s_4+\gamma_{1,2}+\gamma_{3,1}+3=n-7+3=n-4$, so the number of induced $C_5$'s in $G$ is at most
$$\max\{k_1k_2+k_2k_3+k_3k_1: k_1,k_2,k_3\in \mathbb{N}, k_1+k_2+k_3=n-4\}+2= h(n),$$
as claimed.
\end{proof}

We can now combine several of our lemmas to prove Proposition \ref{prop:combine all}.

\begin{proof}[Proof of Proposition \ref{prop:combine all}]
Let $n$ be sufficiently large and let $G$ be an $n$-vertex plane graph with $h(n)+t$ induced $5$-cycles for some $t\geq 1$. Assume, for contradiction, that every subgraph on $n-1$ vertices has at most $h(n-1)+t$ induced $5$-cycles and that every subgraph on $n-3$ vertices has at most $h(n-3)+t$ induced $5$-cycles. Then every vertex in $G$ is contained in at least $h(n)-h(n-1)$ induced $5$-cycles. Moreover, for any three vertices, the number of induced $5$-cycles containing at least one of the three vertices is at least $h(n)-h(n-3)$. By Lemma \ref{lem:compare n and n-1}, we have $h(n)-h(n-1)>2n/3-4$ and by Lemma \ref{lem:compare n and n-3}, we have $h(n)-h(n-3)=2n-11$. Since $n$ is sufficiently large, $h(n)+t>\frac{5}{18}n^2$ and $h(n)-h(n-1)>2n/3-10$, Lemma \ref{lem:three empty k27s} implies that there exist distinct vertices $w_1$, $w_2$, $w_3$ and $u$ such that $w_1w_2,w_2w_3,w_3w_1\in E(G)$ and for every $1\leq i\leq 3$, there exists an empty $K_{2,7}$ whose part of size two is $\{w_i,u\}$. Note that for each $i\in \{1,2,3\}$, we have $u\not \in N(w_i)$ (else the centre vertex in the empty $K_{2,7}$ with $u$ and $w_i$ forming the part of size two is contained in no induced $C_5$'s). Using that for any three vertices, the number of induced $5$-cycles containing at least one of these vertices is at least $2n-11$, Lemma~\ref{lem:scenarios okay} implies that $G$ contains at most $h(n)$ induced $5$-cycles, which is a contradiction.
\end{proof}

\section{Concluding remarks}

In this paper we determined precisely the maximum possible number of induced $5$-cycles that an $n$-vertex planar graph can contain. As remarked in the introduction, the maximum number of induced triangles is known exactly, while the maximum number of induced $4$-cycles is known asymptotically. Very recently, Cox and Martin \cite{CM21old,CM21new} showed that for $k\in \{3,4,5,6\}$, the maximum possible number of (not necessarily induced) $2k$-cycles in an $n$-vertex planar graph is $(\frac{n}{k})^k+o(n^k)$. A construction attaining this bound is obtained by taking a $2k$-cycle and blowing up every second vertex to a set of size roughly $n/k$. The resulting planar graph contains $(\frac{n}{k})^k+o(n^k)$ $2k$-cycles and all of them are induced, which shows that for $k\in \{3,4,5,6\}$, the maximum number of induced $2k$-cycles is also $(\frac{n}{k})^k+o(n^k)$. Cox and Martin conjecture that their result can be extended to general $k$.

\begin{conjecture}[Cox and Martin \cite{CM21old}]
    For any $k\geq 3$, the maximum possible number of (not necessarily induced) $2k$-cycles in an $n$-vertex planar graph is $(\frac{n}{k})^k+o(n^k)$.
\end{conjecture}

If true, this would imply that the maximum number of induced $2k$-cycles is also $(\frac{n}{k})^k+o(n^k)$. Turning to odd cycles, the situation seems to be a bit more complicated. Indeed, if we take a $(2k+1)$-cycle and blow up $k$ pairwise non-adjacent vertices to sets of size roughly $n/k$, then the resulting planar graph contains $(\frac{n}{k})^k+o(n^k)$ induced $C_{2k+1}$'s. On the other hand, there are constructions with much more (not necessarily induced) $C_{2k+1}$'s. Indeed, we can blow up every second vertex of a $2k$-cycle to sets of size $n/k$ and take a spanning path inside each blownup set. The resulting planar graph will contain $2k(\frac{n}{k})^k+o(n^k)$ copies of $C_{2k+1}$, but they will not be induced. Hence, it remains possible that for $k\geq 3$, the maximum number of induced $(2k+1)$-cycles is $(\frac{n}{k})^k+o(n^k)$ (but by our results this is not true when $k=2$).

\paragraph{Note added.} Building on the original version of this paper where we proved an asymptotic form of Theorem \ref{maxindc5}, but independently from our revision, Michael Savery \cite {Sav21} also determined the exact maximum possible number of induced $5$-cycles in an $n$-vertex planar graph when $n$ is sufficiently large, and characterised the extremal examples. He also proved that for sufficiently large $n$, $K_{2,n-2}$ is the unique $n$-vertex planar graph maximising the number of induced $4$-cycles.

\section*{Acknowledgment}
We thank the two anonymous referees for their valuable comments which significantly improved the paper. We are also grateful to Michael Savery for pointing out a small error in the previous version.

The research of the second, fifth and sixth author was partially supported by the National Research, Development and Innovation Office NKFIH, grants  K116769, K132696. The research of the third author was partially supported by the NKFIH grant 2018-2.1.7-UK\_GYAK-2019-00020. The research of the fifth author is partially supported by  Shota Rustaveli National Science Foundation of Georgia SRNSFG, grant number DI-18-118.

\bibliographystyle{abbrv}
\bibliography{reference}
\end{document}